\newtheorem{Th}{Theorem}[section]
\newtheorem{Prop}[Th]{Proposition}
\newtheorem{Lem}[Th]{Lemma}
\newtheorem{Rem}[Th]{Remark}
\newtheorem{Ex}[Th]{Example}
\newcommand{\eps}{\varepsilon}
\newcommand{\R}{\mathbb{R}}
\newcommand{\Z}{\mathbb{Z}}
\newcommand{\cB}{{\mathcal B}}
\newcommand{\cC}{{\mathcal C}}
\newcommand{\cD}{{\mathcal D}}
\newcommand{\cE}{{\mathcal E}}
\newcommand{\cF}{{\mathcal F}}
\newcommand{\cH}{{\mathcal H}}
\newcommand{\cI}{{\mathcal I}}
\newcommand{\cJ}{{\mathcal J}}
\newcommand{\cL}{{\mathcal L}}
\newcommand{\cM}{{\mathcal M}}
\newcommand{\cN}{{\mathcal N}}
\newcommand{\cP}{{\mathcal P}}
\newcommand{\cS}{{\mathcal S}}
\newcommand{\cU}{{\mathcal U}}
\newcommand{\Ga}{\Gamma}
\newcommand{\weakto}{\rightharpoonup}
\numberwithin{equation}{section}
\newcommand{\curl}{\nabla \times}
\newcommand{\divv}{\mathrm{div}\,}
\renewcommand{\div}{\mathrm{div}\,}
\newcommand{\triple}[1]{{\left\vert\kern-0.25ex\left\vert\kern-0.25ex\left\vert #1 
    \right\vert\kern-0.25ex\right\vert\kern-0.25ex\right\vert}}
\begin{document}

\title{Generalized linking-type theorem with applications to strongly indefinite problems with sign-changing nonlinearities}

\author[F. Bernini]{Federico Bernini}
	\address[F. Bernini]{\newline\indent
			Dipartimento di Matematica e Applicazioni,
			\newline\indent 
			Università degli Studi di Milano-Bicocca,
			\newline\indent 
			via Roberto Cozzi 55, I-20125, Milano, Italy
	}
	\email{\href{mailto:f.bernini2@campus.unimib.it}{f.bernini2@campus.unimib.it}}

\author[B. Bieganowski]{Bartosz Bieganowski}
	\address[B. Bieganowski]{\newline\indent  	
	Institute of Mathematics, \newline\indent
	Polish Academy of Sciences, \newline\indent
	ul. \'Sniadeckich 8, 00-656 Warsaw, Poland\newline\indent
	and \newline\indent
	Faculty of Mathematics and Computer Science,		\newline\indent 	Nicolaus Copernicus University, \newline\indent ul. Chopina 12/18, 87-100 Toru\'n, Poland}	
\email{\href{mailto:bbieganowski@impan.pl}{bbieganowski@impan.pl}, \href{mailto:bartoszb@mat.umk.pl}{bartoszb@mat.umk.pl}}

%
%
%
%

\maketitle

\pagestyle{myheadings} \markboth{\underline{F. Bernini, B. Bieganowski}}{
		\underline{Generalized linking-type theorem with applications to strongly indefinite problems with sign-changing nonlinearities}}

\begin{abstract} 

We show a linking-type result which allows us to study strongly indefinite problems with sign-changing nonlinearities. We apply the abstract theory to the singular Schr\"odinger equation
$$
-\Delta u + V(x)u + \frac{a}{r^2} u = f(u) - \lambda g(u), \quad x = (y,z) \in \R^K \times \R^{N-K}, \ r = |y|,
$$
where
$$
0 \not\in \sigma \left( -\Delta + \frac{a}{r^2} + V(x) \right).
$$
As a consequence we obtain also the existence of solutions to the nonlinear curl-curl problem. \\

\noindent \textbf{AMS 2020 Subject Classification:}  35Q55, 35A15, 35J20, 35Q60, 58E05, 78A25 \\
\noindent \textbf{Keywords:}  variational methods, Maxwell equations, singular potential, nonlinear Schr\"odinger equation, sign-changing nonlinearities, strongly indefinite problems
\end{abstract}

\section{Introduction}

In this paper we are interested in an abstract setting which allows us to study strongly indefinite problems with sign-changing nonlinearities. Consider a general, real Hilbert space $(X, \| \cdot \|)$ and a nonlinear functional $\cJ : X \rightarrow \R$ of $\cC^1$-class. We are looking for nontrivial critical points of $\cJ$, i.e. points $u \in X \setminus \{0\}$ with $\cJ'(u) = 0$. If $\cJ$ is sequentially weak-to-weak* continuous, then the problem usually reduces to finding a \textit{Palais-Smale sequence} $(u_n) \subset X$:
$$
(\cJ(u_n)) \subset \R \mbox{ is bounded} \quad \mbox{and} \quad \cJ'(u_n) \to 0 \mbox{ in } X^*
$$
or a \textit{Cerami sequence} $(u_n) \subset X$:
$$
(\cJ(u_n)) \subset \R \mbox{ is bounded} \quad \mbox{and} \quad (1+\|u_n\|)\cJ'(u_n) \to 0 \mbox{ in } X^*.
$$
Having such a sequence, from the sequential weak-to-weak* continuity of $\cJ'$, one can immediately see that any weak limit point of $(u_n)$ is a critical point of $\cJ$. Hence the emphasis is on finding such a sequence. Suppose, for simplicity, that $\cJ$ is sufficiently regular and that $X = X^+ \oplus X^-$ has an orthogonal splitting such that the second variation $\cJ''(0)[u][u]$ is positive definite on $X^+$ and negative definite on $X^-$. If $X^- = \{0\}$ we say that the problem is positive definite, otherwise we say that the problem is strongly indefinite. Suppose that $\cJ$ is of the form
$$
\cJ(u) = \frac12 \|u^+\|^2 - \frac12 \|u^-\|^2 - \cI(u), \quad u = u^+ + u^- \in X^+ \oplus X^-,
$$
where $\cI$ is the nonlinear (usually super-quadratic at infinity) part of $\cJ$.

In the positive definite case one can use the mountain pass geometry (introduced by Ambrosetti and Rabinowitz, see \cite{AR}) or the Nehari manifold method (proposed in \cite{Nehari}) to find a Palais-Smale sequence (\cite{BiegMed}) or a Cerami sequence (\cite{Bi}), even for functionals with sign-changing nonlinear part $\cI$. A first important result that allows a variational treatment of strongly indefinite problems is the linking theorem proved by Rabinowitz in 1978 (see \cite{R}) in the case when one of the spaces $X^+$, $X^-$ have a finite dimension. The result has been later generalized by Kryszewski and Szulkin in 1997 (see \cite{KS}) allowing to manage the infinite dimension of both spaces $X^+$ and $X^-$. A second possible approach is to use the Nehari-Pankov manifold (provided by Pankov, see \cite{P}), which has been successfully applied by Szulkin and Weth (see \cite{SzW}) in the case where $X^+$, $X^-$ have infinite dimension. Both of this approaches require that $\cI(u) \geq 0$, cf. \cite{BM, KS, M2, SzW}.

For this reason, we are interested in providing a result in which this last request can be drop off.
Our result is a linking-type approach which may be viewed as a modification of abstract results from \cite{KS, M2} and a generalization of \cite{CW}. Moreover, we also show that, under certain condition (e.g. $\cI(u) \geq 0$), $\cJ(u_n)$ can be bounded by the infimum of $\cJ$ on the Nehari-Pankov manifold and therefore we can recover the existence of solutions to a certain class of equations considered in \cite{KS, M2}. 

We will show an application of the abstract theorem to the Schr\"odinger equation
$$
-\Delta u + V(x)u + \frac{a}{r^2} u = f(u) - \lambda g(u), \quad x = (y,z) \in \R^K \times \R^{N-K}, \ r = |y|,
$$
which arises from the mathematical physics. In particular, one can obtain it when looking for time-harmonic electric fields being solutions of a particular nonlinear Maxwell equation or when looking for standing waves for the time-dependent Schr\"odinger equation.

The system of Maxwell equations is of the form
$$
\left\{ \begin{array}{l}
\curl \cH = \cJ + \frac{\partial \cD}{\partial t} \\
\div (\cD) = \rho \\
\frac{\partial \cB}{\partial t} + \curl \cE = 0 \\
\div (\cB) = 0,
\end{array} \right.
$$
where $\cE$ is the electric field, $\cB$ is the magnetic field, $\cD$ is the electric displacement field and $\cH$ denotes the magnetic induction. Moreover $\cJ$ denotes the electric current intensity and $\rho$ the electric charge density. We consider also the following constitutive relations
$$
\begin{cases}
\cD=\eps \cE + \cP\\
\cH=\frac{1}{\mu}\cB - \cM,
\end{cases}
$$
where $\cP$ is the polarization and $\cM$ is the magnetization. In the absence of charges, currents and magnetization, and assuming that $\mu \equiv 1$, where $\mu$ is the permeability of the medium, we obtain the time-dependent equation (see e.g. \cite{BaMe1})
$$
\curl \left( \curl \cE \right) + \varepsilon \frac{\partial^2 \cE}{\partial t^2} = - \frac{\partial^2 \cP}{\partial t^2},
$$
where $\varepsilon$ is the permittivity of the medium.We look for a time-harmonic field $\cE = \mathbf{E}(x) \cos(\omega t)$. Moreover, we suppose that the nonlinear polarization $\cP$ is of the form 
$$
\cP = \chi\left( \langle |\cE|^2  \rangle \right) \cE, 
$$
i.e. the scalar dielectric susceptibility $\chi$ depends only on the time average 
$$
\langle |\cE|^2 \rangle = \frac{1}{T} \int_0^T | \cE(x,t)|^2 \, dt = \frac12 |\mathbf{E}|^2
$$ 
of the intensity of $\cE$, where $T = \frac{2\pi}{\omega}$. Hence, $\cP = \mathbf{P}(\mathbf{E}(x)) \cos(\omega t)$, where $\mathbf{P}(\mathbf{E}) = \chi\left( \frac12 |\mathbf{E}|^2 \right) \mathbf{E}$. This ansatz lead to 
\begin{equation}\label{eq:maxwell}
\curl (\curl \mathbf{E}) + V(x)\mathbf{E} = h(\mathbf{E}), \quad x \in \R^3
\end{equation}
with $V(x) = - \omega^2 \varepsilon(x)$ and $h(\mathbf{E}(x)) = \mathbf{P}(\mathbf{E}(x)) \omega^2$. For media with Kerr effect, strong electric fields $\cE$ of high intensity cause the refractive index to vary quadratically and then $\cP$ has the form
$$
\cP( t, x) = \alpha(x) \langle |\cE|^2 \rangle \cE,
$$
see \cite{Nie, S1}. Assuming that $\alpha(x) \equiv \alpha$ is a constant, we get $\mathbf{P}(\mathbf{E}(x)) = \frac{\alpha}{2} | \mathbf{E}(x)|^2 \mathbf{E}(x)$. In the paper we are interested in the more general case, where the polarization may consists of two competing terms, e.g. $\mathbf{P}(\mathbf{E}) = |\mathbf{E}|^{p-2} \mathbf{E} - |\mathbf{E}|^{q-2} \mathbf{E}$.

Moreover, we study the total electromagnetic energy given by
\begin{equation}
\label{eq:electroen}
	\cL(t):= \frac12 \int_{\R^3} \cE\cD + \cB\cH \, dx.
\end{equation}
We show that $\cL(t)$ is finite and constant (does not depend on $t$) for the solution we find in Theorem \ref{th:main2}.
For more detailed physical background see e.g. \cite{A, DLPSW, M1, S1, S2}. 

Note that the kernel of $\curl \curl$ has an infinite dimension, because $\curl (\nabla \varphi) = 0$ for any $\varphi \in \cC_0^\infty (\R^3)$. Hence, the energy functional associated with \eqref{eq:maxwell} is strongly indefinite. Moreover its derivative is not weak-to-weak* continuous and every nontrivial critical point has infinite Morse index. Hence, we will consider the cylindrically symmetric setting and reduce the curl-curl problem to the Schr\"odinger equation. 

The problem \eqref{eq:maxwell} in a bounded domain $\Omega \subset \R^3$ in a case when the domain is surrounded by a perfect conductor, i.e.
$$
\nu \times \mathbf{E} = 0 \quad \mbox{on } \partial \Omega
$$
was studied in a series of papers \cite{BaMe1, BaMe2, BaMe3}. Under the same boundary condition an eigenvalue problem was studied in \cite{Y}. See also \cite{HL}.

Looking for classical solutions of the form (see e.g. \cite{BDPR, Z})
\begin{equation}\label{eq:form}
\mathbf{E}(x) = \frac{u(r, x_3)}{r} \left( \begin{array}{c} -x_2 \\ x_1 \\ 0 \end{array} \right), \quad r = \sqrt{x_1^2+x_2^2}
\end{equation}
to \eqref{eq:maxwell} leads to 
\begin{equation}\label{eq:schroedinger}
-\Delta u + V(x)u + \frac{a}{r^2} u = f(u) - \lambda g(u), \quad x = (y,z) \in \R^K \times \R^{N-K}, \ r = |y|,
\end{equation}
with $N=3$, $K=2$, $a=1$, where $\Delta = \frac{\partial ^2}{\partial r^2} + \frac{1}{r} \frac{\partial}{\partial r} + \frac{\partial^2}{\partial x_3^2}$ is the 3-dimensional Laplacian operator in cylindrically symmetric coordinates $(r, x_3)$, and nonlinear terms are described by the following relation
$$
h(\mathbf{E}) = f(\alpha)w - \lambda g(\alpha)w,
$$
where $\mathbf{E} = \alpha w$ for some $w \in \R^3$, $|w|=1$, $\alpha \in \R$ and $h$ is the nonlinear term in \eqref{eq:maxwell}. This equivalence also holds for weak solutions (see \cite{Bi, GMS}).

We would like to point out that \eqref{eq:schroedinger}, where $a > -\frac{(K-2)^2}{4}$ and $N > K \geq 2$, is also of particular interest on its own.  The problem naturally appears when looking for standing waves $\Psi(x, t) = u(x)e^{-\mathbf{i}\omega t}$ for the time-dependent nonlinear Schr\"odinger equation (see e.g. \cite{BBR}) of the form
$$
\mathbf{i} \frac{\partial \Psi}{\partial t} = -\Delta \Psi + \left( V(x) + \frac{a}{r^2} u + \omega \right) \Psi - f(|\Psi|) + \lambda g(|\Psi|).
$$ 
In \cite{BBR} the authors found a nontrivial and nonnegative solution to \eqref{eq:schroedinger} with $a=1$, $g \equiv 0$ and $V \equiv 0$. This problem was also studied in \cite{GMS} with $V \equiv 0$ and $g \equiv 0$, and the authors investigate the existence and multiplicity of solutions. 

The Schr\"odinger equation \eqref{eq:schroedinger} appears in nonlinear optics, where photonic crystals admitting nonlinear effects are studied (\cite{Kuch}). Then the nonlinearity is responsible for the polarization in a photonic crystal and e.g. in a self-focusing Kerr-like media one has $f(u) = |u|^{2}u$ and $g\equiv 0$ (\cite{Bur, Slu}). In the case $f(u) = |u|^{p-2}u$ and $g(u) = |u|^{q-2}u$, $\lambda > 0$ and $p > q$, we deal with a mixture of self-focusing and defocusing materials. Such nonlinearities in Schr\"odinger equations were studied in the positive definite case in \cite{BiegMed}.

Strongly indefinite Schr\"odinger-type equations of the form
\begin{equation*}
	-\Delta u + V(x)u = f(x,u)
\end{equation*}
with the associated energy functional $\cJ:X \to \R$ have been of great interest in recent years, starting with the works of \cite{AlLi}, \cite{Jeanjean}. Some years later, Troestler and Willem (see \cite{TrWi} proved the existence of a non-trivial solution through a linking theorem requiring that the nonlinearity satisfy the so-called \textit{Ambrosetti-Rabinowitz condition}
$$
0 < \mu F(x,u) \leq f(x,u)u, \quad \mbox{where } \mu > 2,
$$ 
and that the associated functional $\cJ$ is of $\cC^2$-class. The linking geometry was demonstrated by exploiting a generalization of the Smale's degree for Fredholm maps. Then, Kryszewski and Szulkin in \cite{KS}, through the abstract result already cited above, proved the existence of (at least) a nontrivial solution for the functional of $\cC^1$-class and nonlinearities satisfying the Ambrosetti-Rabinowitz condition.

In 2005, Pankov (see \cite{P}) proved that the equation admits a nontrivial weak solution that is continuous and that exponentially decays to infinity. For this purpose, he developed a method consisting on the reduction of the problem to a problem restricted to the following set
\[
	\cN=\left\{u \in X \setminus X^- : \cJ'(u)(u) = 0 \text{ and } \cJ'(u)(v) = 0 \text{ for every } v \in X^- \right\},
\] 
which contains all the critical points of $\cJ$. Moreover $\cN$ is of $\cC^1$-class and is a natural constraint for the energy functional $\cJ$.

This technique was then developed by Szulkin and Weth in \cite{SzW} by removing an hypothesis on the growth of the first derivative (which is stronger than the Ambrosetti-Rabinowitz condition and the monotonicity assumption from \cite{SzW}). Without sufficient regularity of the nonlinearity, the manifold $\cM$ needs not be anymore of $\cC^1$-class. Therefore Pankov's method does not work. To overcome this problem, Szulkin and Weth construct a homemorphism of $\cM$ and the unit sphere in $X^+$. Moreover, this homeomorphism preserves the $\cC^1$-class of the functional and the problem can be reduced to the sphere being a $\cC^1$-manifold.

However, in all the works cited so far it is required that $\cI\geq 0$, which is not our case. The sign-changing nonlinearity does not satisfy the Ambrosetti-Rabinowitz condition. Furthermore, the double indefinite nature of the problem, given by the operator and the sign-changing nonlinearity, do not even allow us to use the Pankov or Szulkin-Weth strategy, since it is not clear how to construct a homeomorphism between the set $\cN$ and the unit sphere in $X^+$.

Consequently, we cannot even use the approach provided by Mederski and the second author in \cite{BiegMed}, where the Szulkin-Weth approach is used to deal with sign-changing nonlinearities in the positive definite case. As an application, they find a ground state solution to the problem
\[
	-\Delta u + V(x)u = f(x,u) - \Gamma (x)|u|^{q-2}u
\]
where the spectrum of the Schr\"odinger operator is positive. Furthermore, in the spirit of Pankov, they proved that this solution is continuous and vanishes exponentially at infinity. Regarding the existence of solutions, our result can be also applied in this case with $\Gamma (x)\equiv \lambda$, $f(x,u)=f(u)$ and $g(u) = |u|^{q-2}u$. 

We end this Section by listing the assumptions we made on the equation \eqref{eq:schroedinger} and providing some examples of nonlinearities $f$ and $g$. Then we state our results on the existence of solutions to \eqref{eq:schroedinger} and \eqref{eq:maxwell}. Hereafter $O(K)$ denotes the group of real and orthogonal $K \times K$-matrices and its action on the euclidean space is given by the multiplication. 

We assume that the potential $V$ satisfies
\begin{itemize}
\item[(V)] $V \in L^\infty (\R^N)$ is $O(K) \times \{I_{N-K}\}$-invariant, $\Z^{N-K}$-periodic in $z$ and
\begin{equation}\label{eq:zero-in-gap}
0 \not\in \sigma \left( -\Delta + \frac{a}{r^2} + V(x) \right) \quad \mbox{and} \quad \sigma \left( -\Delta + \frac{a}{r^2} + V(x) \right) \cap (-\infty,0) \neq \emptyset.
\end{equation}
\end{itemize}
For examples of sign-changing potentials satisfying (V) see e.g. \cite{BDPR, Bi}. The assumption (V) implies that the energy functional associated with \eqref{eq:schroedinger} is strongly indefinite. Hence, in the paper we are interested in the general, variational setting which allows us to study strongly indefinite problems with sign-changing nonlinearities, including \eqref{eq:schroedinger}. 

To show the existence of a nontrivial solution we impose the following assumptions. In what follows we use $\lesssim$ to denote the inequality up to a multiplicative constant.

\begin{enumerate}
\item[(F1)] $f: \R \to \R$ is odd, continuous and there is $2<p<2^* := \frac{2N}{N-2}$ such that
\[
	|f(u)| \lesssim 1+|u|^{p-1} \text{ for all } u \in \R.
\]
\item[(F2)] $f(u)=o(|u|)$ as $u \to 0$.
\item[(F3)] There is $2 < q < p$ such that $F(u)/|u|^q \to +\infty$ as $|u| \to +\infty$, where $F(u)=\int_0^u f(s) \, dx$ and $F(u) \geq 0$ for all $u \in \R$.
\item[(F4)] $u \mapsto f(u)/|u|^{q-1}$ is nondecreasing in $(-\infty,0)$ and on $(0,+\infty)$.
\item[(F5)] There is $\rho > 0$ such that $|u|^{p-1} \lesssim |f(u)| \lesssim |u|^{p-1}$ for $|u| \geq \rho$.
\item[(G1)] $g: \R \to \R$ is odd, continuous such that
\[
	|g(u)| \lesssim 1+|u|^{q-1} \text{ for all } u \in \R.
\]
\item[(G2)] $g(u)=o(|u|)$ as $u \to 0$.
\item[(G3)] $u \mapsto g(u)/|u|^{q-1}$ is nonincreasing in $(-\infty,0)$ and on $(0,+\infty)$ and there holds
\[
	g(u)u \geq 0 \text{ for all } u \in \R.
\]
\end{enumerate}

We provide several examples of nonlinearities satisfying the foregoing assumptions.

\begin{Ex}
In what follows, $2 < q < p < 2^*$.
\begin{enumerate}
\item[(i)] It is clear that $f(u) = |u|^{p-2}u$ and $g(u) = |u|^{q-2}u$ satisfy (F1)--(F5), (G1)--(G3).
\item[(ii)] Let $\rho > 0$. We will check that 
$$
f(u) = \left\{ \begin{array}{ll}
|u|^{q-2} u \log \left(1+|u|^{p-q}\right) & \quad \mbox{for } |u| < \rho \\
C \left(1 + \arctan |u| \right) |u|^{p-2} u & \quad \mbox{for } |u| \geq \rho
\end{array} \right.
$$
satisfies (F1)--(F5), where $C > 0$ is chosen so that $f$ is continuous on $\R$. Indeed, (F1) is clear. To get (F2) we see that
$$
\lim_{u\to 0} \frac{f(u)}{u} = \lim_{u\to 0} |u|^{q-2} \log \left(1+|u|^{p-q}\right) = 0.
$$
Using the L'H\^{o}pital's rule we get
$$
\lim_{|u|\to+\infty} \frac{F(u)}{|u|^q} = \lim_{|u|\to+\infty} \frac{f(u)}{q|u|^{q-2}u} = \frac{C}{q}  \lim_{|u|\to+\infty}\left(1 +  \arctan |u| \right) |u|^{p-q} = +\infty.
$$
Moreover, for $u \geq 0$, $F(u) = \int_0^u f(s) \, ds \geq 0$ and, since $f$ is odd, $F(-u)=F(u) \geq 0$. Thus (F3) holds. To show (F4) we note that
$$
\frac{f(u)}{|u|^{q-1}} = \left\{ \begin{array}{ll}
u \log \left(1+|u|^{p-q}\right) & \quad \mbox{for } |u| < \rho \\
C \left(1 + \arctan |u| \right) |u|^{p-q} u & \quad \mbox{for } |u| \geq \rho
\end{array} \right.
$$
is nondecreasing on $(-\infty,0)$ and on $(0,+\infty)$. To get (F5) we note that for $|u| \geq \rho$ the following estimates hold true
$$
C |u|^{p-2}u \leq C(1+\arctan |u|) |u|^{p-2}u \leq C \left(1 + \frac{\pi}{2} \right) |u|^{p-2}u.
$$
\item[(iii)] We will show that $g(u) = \frac{|u|^{q-2}u}{1+e^{|u|}}$ satisifes (G1)--(G3). (G1) is clear, to get (G2) we compute
$$
\lim_{u\to 0} \frac{g(u)}{u} = \lim_{u\to 0} \frac{|u|^{q-2}}{1+e^{|u|}} = 0.
$$
To see (G3) we observe that $\frac{g(u)}{|u|^{q-1}} = \frac{\mathrm{sgn}\, (u)}{1+e^{|u|}}$ is nonincreasing on $(-\infty,0)$ and on $(0,+\infty)$. Moreover $g(u)u = \frac{|u|^{q}}{1+e^{|u|}} \geq 0$ for $u \in \R$.
\item[(iv)] Similarly one can compute that $g(u) = \frac{|u|^{q-2}u}{1+\arctan |u|}$ satisfies (G1)--(G3).
\end{enumerate}
\end{Ex}


\begin{Th}\label{th:main1}
Suppose that (V), (F1)--(F5), (G1)--(G3) hold. If $\lambda > 0$ and $\rho > 0$ in (F5) are sufficiently small, there is a nontrivial soluton to \eqref{eq:schroedinger}.
\end{Th}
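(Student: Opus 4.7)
The plan is to apply the paper's generalized linking-type theorem to the natural energy functional associated with \eqref{eq:schroedinger}. Introduce the Hilbert space $X$ of $O(K)\times\{I_{N-K}\}$-invariant functions in the form domain of $L := -\Delta + a/r^2 + V(x)$, with the inner product $\langle u,v\rangle := \int_{\R^N}|L|^{1/2}u \cdot |L|^{1/2}v\,dx$. By (V) we have $0\notin\sigma(L)$, so the spectral projections yield an orthogonal splitting $X = X^+ \oplus X^-$, with the second part nontrivial by \eqref{eq:zero-in-gap}, the quadratic form positive on $X^+$ and negative on $X^-$. The functional
$$
\cJ(u) = \tfrac12\|u^+\|^2 - \tfrac12\|u^-\|^2 - \int_{\R^N} F(u)\,dx + \lambda\int_{\R^N} G(u)\,dx
$$
is of $\cC^1$-class by (F1)--(F2) and (G1)--(G2), and $\cJ'$ is sequentially weak-to-weak* continuous on the subcritical symmetric subspace thanks to the improved embeddings available for cylindrically symmetric functions (cf.\ \cite{BBR, GMS}).

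Next I would verify the geometric hypotheses of the abstract linking theorem. Assumptions (F1)--(F2) and (G1)--(G2), combined with Sobolev embeddings, yield $\cJ(u) \geq \alpha\|u\|^2 - C\|u\|^p$ on $X^+$, so $\cJ$ is strictly positive on a small sphere of $X^+$. For the upper barrier, fix a unit $e\in X^+$; by (F3) the map $(t,v)\mapsto \cJ(v+te)$ is bounded above on $X^- \oplus \R^+ e$ and tends to $-\infty$ on expanding bounded sets, and this dominates the $\lambda\int G$ term when $\lambda$ is small, because $G$ has subcritical growth $q<p$ by (G1). The generalized linking theorem then produces a Cerami sequence $(u_n)\subset X$ with $\cJ(u_n)\to c>0$ and $(1+\|u_n\|)\cJ'(u_n)\to 0$.

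The crucial technical step is proving $(u_n)$ is bounded. I would test $\cJ'(u_n)$ against $u_n$ and $u_n^+-u_n^-$ and exploit (F4), which gives a Nehari-Pankov-style inequality $\tfrac12 f(u)u - F(u)\geq 0$ together with sign control on the difference terms, and (G3), whose opposite monotonicity lets one absorb $\lambda\int g(u_n)u_n$ and $\lambda\int G(u_n)$ for small $\lambda$. Assumption (F5), restricted to $|u|\geq \rho$ with $\rho$ small, pins the tail of $f$ to a pure power of order $p-1$ and supplies the coercivity of $\int f(u_n)u_n$ needed to close the bound. Once boundedness is obtained, a standard $\Z^{N-K}$-translation argument using Lions' nonvanishing lemma, the $\Z^{N-K}$-periodicity of $V$, and the $O(K)$-invariance produces either a nontrivial weak limit or a translated nontrivial weak limit, which is automatically a critical point of $\cJ$ by weak-to-weak* continuity of $\cJ'$.

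The main obstacle I anticipate is precisely this boundedness of the Cerami sequence: the sign-changing term $-\lambda g(u)$ ruins the Ambrosetti--Rabinowitz inequality, so the classical linking-compactness argument of \cite{KS} does not apply verbatim, and the Szulkin--Weth homeomorphism technique is unavailable since both $X^+$ and $X^-$ are infinite-dimensional. The smallness thresholds on $\lambda$ and $\rho$ arise exactly to make the interplay of (F4)--(F5) and (G3) close: too large a $\lambda$, or a $\rho$ allowing the log/arctan-type correction to dominate over a long range, destroys the monotonicity gain needed to control $\|u_n\|$. Nontriviality of the final solution, in contrast, is a routine concentration argument once the linking level $c>0$ and boundedness are secured.
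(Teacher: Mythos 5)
Your route is the paper's: the same functional setting and spectral splitting, the same generalized linking theorem, a boundedness lemma for Cerami sequences based on splitting at $|u|=\rho$ and the monotonicity of $g/f$, and a Lions-type $\Z^{N-K}$-translation argument at the end. There is, however, a genuine gap at the point where nontriviality is extracted. You assert that the abstract theorem produces a Cerami sequence with $\cJ(u_n)\to c>0$ and you later rely on this positive level (``routine concentration argument once the linking level $c>0$ \dots is secured''). Precisely because $\cI(u)=\int F(u)\,dx-\lambda\int G(u)\,dx$ changes sign, $\cJ$ is not $\tau$-upper semicontinuous, and the deformation argument cannot pin the sequence to the level $c$: Theorem \ref{th:abstrExistence} only yields $\sup_n\cJ(u_n)\le c$ together with the additional information $\inf_n\triple{u_n}\ge\frac{\delta}{2}$, see \eqref{Cer}. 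With only $\sup_n\cJ(u_n)\le c$ and the Cerami condition, vanishing cannot be ruled out by an energy-level argument, since the sequence could simply tend to $0$ in norm. The paper excludes vanishing through the $\tau$-norm lower bound: vanishing plus Lemma \ref{lem:lions} and $\cJ'(u_n)u_n^\pm\to0$ force $\|u_n^\pm\|\to0$, contradicting $\triple{u_n}\ge\frac{\delta}{2}$. Correspondingly, your verification of the geometry omits the third inequality in (A3), namely $\sup_{\triple{v}\le\delta}\cJ(v)<\inf_{\cS_r^+}\cJ$, which is exactly the hypothesis that generates this lower bound; in the paper it is checked (Step 3 of Section \ref{sect:3}) by the same $\eps=\mu_0/8$ computation under the smallness condition \eqref{eq:lambda_estimate} on $\lambda$. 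Without either a level lower bound or the $\tau$-ball condition, your final step does not close.

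A smaller inaccuracy: the domination of $\lambda\int G(u)\,dx$ in the linking geometry is not a consequence of $q<p$. The lower bound supplied by (F3) is $F(u)\ge C_{F,\eps}|u|^q-\eps u^2$, i.e.\ with the \emph{same} exponent $q$ that governs the upper bound \eqref{growth_g} for $G$; what makes the boundary estimate on $\partial M(u)$ (and the small $\tau$-ball estimate) work is the explicit smallness $\lambda<\frac{1}{\kappa 2^q}\frac{C_{F,\mu_0/8}}{C_{G,\mu_0/8}}$ combined with the $L^q$-continuity of the projections $X\to X^\pm$ from \eqref{eq:kappa}. Your boundedness sketch is in the spirit of the paper's Lemma \ref{lem:cerami_bdd} and looks fillable, including the role of (F5) and of the nonincreasing ratio $g/f$ on $\{|u|\ge\rho\}$, so the remaining issues are concentrated in the two points above.
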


As a consequence of Theorem \ref{th:main1} and the equivalence of solutions (see Theorem \ref{th:equiv}) we obtain also the existence result for the Maxwell problem \eqref{eq:maxwell}.

\begin{Th}\label{th:main2}
Under assumptions of Theorem \ref{th:main1} there is a nontrivial solution to \eqref{eq:maxwell} of the form \eqref{eq:form}.
\end{Th}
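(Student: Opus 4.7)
The plan is to obtain Theorem \ref{th:main2} as a direct consequence of Theorem \ref{th:main1} combined with the equivalence result for classical/weak solutions between the Schrödinger reduction and the curl-curl problem (Theorem \ref{th:equiv}), which has already been established in the literature in the cylindrically symmetric setting (cf.\ \cite{Bi, GMS}).

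First, I would specialise the abstract setup to the physically relevant case $N=3$, $K=2$, $a=1$, so that assumption (V) applies to the operator $-\Delta + \frac{1}{r^2} + V(x)$ with $r = \sqrt{x_1^2+x_2^2}$. By Theorem \ref{th:main1}, under the hypotheses (V), (F1)--(F5), (G1)--(G3), for $\lambda>0$ and $\rho>0$ sufficiently small, there exists a nontrivial (weak) solution $u \in H^1(\R^3)$ of the Schrödinger equation \eqref{eq:schroedinger}. Since the potential $V$ and the weight $\frac{1}{r^2}$ are invariant under $O(2)\times\{I_1\}$, the functional framework can be restricted to the subspace of cylindrically symmetric functions, so we may in fact take $u$ to depend only on $(r,x_3)$; this is the only property of $u$ really needed to write down the ansatz.

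Next, I would define the vector field
\[
\mathbf{E}(x) \,:=\, \frac{u(r,x_3)}{r}\bigl(-x_2,\,x_1,\,0\bigr)^{\!\top}, \qquad r=\sqrt{x_1^2+x_2^2},
\]
as in \eqref{eq:form}. By Theorem \ref{th:equiv} (the equivalence of solutions between \eqref{eq:maxwell} and \eqref{eq:schroedinger} in the cylindrically symmetric class), $\mathbf{E}$ is then a weak solution of the nonlinear curl-curl equation \eqref{eq:maxwell} with nonlinearity $h(\mathbf{E}) = f(\alpha)w - \lambda g(\alpha)w$ where $\mathbf{E} = \alpha w$, $|w|=1$. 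It remains only to verify nontriviality: since $u \not\equiv 0$ and the vector field $(-x_2,x_1,0)^{\!\top}/r$ is nonvanishing off the $x_3$-axis, $\mathbf{E} \not\equiv 0$, proving the theorem.

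The main (essentially the only) obstacle is that the argument rests entirely on Theorem \ref{th:equiv}, whose proof must carefully check that the ansatz \eqref{eq:form} sends the natural energy space for \eqref{eq:schroedinger} (taking into account the Hardy-type weight $\frac{1}{r^2}$) into the correct divergence-free, cylindrically symmetric subspace of the energy space for the curl-curl operator, and that weak formulations correspond to one another under this identification. Once this correspondence is in place, the present theorem is an immediate translation of the conclusion of Theorem \ref{th:main1}; no additional variational analysis is required.
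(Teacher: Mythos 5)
Your proposal is correct and follows the same route as the paper: the paper's proof of Theorem \ref{th:main2} is exactly the one-line combination of Theorem \ref{th:main1} (which already produces a cylindrically symmetric $u$, since the space $X$ consists of $O(2)\times\{I\}$-invariant functions) with the equivalence Theorem \ref{th:equiv}, whose hypotheses hold because (F1), (G1) with $p<2^*=6$ give $|\tilde{f}(u)| \lesssim |u|+|u|^5$. The extra checks you mention (nontriviality of $\mathbf{E}$ and the correspondence of weak formulations) are subsumed in Theorem \ref{th:equiv}, so no further argument is needed.
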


The structure of the paper is the following. In Section \ref{sect:abstract} we introduce an abstract setting based on a linking-type approach, which allows us to find a Cerami sequence which is bounded away from $0$ (see also Section \ref{sect:3}). The Section \ref{sect:functional} is devoted to the functional setting for equations \eqref{eq:maxwell} and \eqref{eq:schroedinger}. In Section \ref{sect:5} we study the boundedness of a Cerami sequence. The last, sixth Section contains proofs of Theorems \ref{th:main1} and \ref{th:main2}, and the property of the total electromagnetic energy (Proposition \ref{prop:energy}).

In what follows $C$ denotes a generic, positive constant which may vary from one line to another. Moreover $| \cdot |_k$ denotes the usual $L^k$-norm.

\section{\texorpdfstring{$\tau$}{tau}-topology and critical point theory}\label{sect:abstract}

Our approach is mainly based on \cite{CW, KS, M2}. Let $(X, \| \cdot \|)$ be a Hilbert space. Assume that there is an orthogonal splitting $X = X^+ \oplus X^-$. It is clear that every $u \in X$ has a unique decomposition $u = u^+ + u^-$, where summands satisfy $u^\pm \in X^\pm$. In order to state the critical point theory we introduce a new topology $\tau$ in the space $X$, see \cite{KS}. Let $(e_k)_{k=1}^\infty \subset X^-$ be a complete orthonormal sequence in the space $X^-$. Then we define a norm $\triple{\cdot}$ in $X$ by
$$
\triple{u} := \max \left\{ \| u^+ \|, \sum_{k=1}^\infty \frac{1}{2^{k+1}} \left| \langle u^-, e_k \rangle \right| \right\}.
$$
Let $\tau$ denote the topology on $X$ generated by $\triple{\cdot}$. We note that $\tau$ is weaker than the topology generated by the norm $\| \cdot \|$ and that the following inequalities hold
$$
\|u^+\| \leq \triple{u} \leq \|u\|.
$$
We also recall that for bounded sequences $(u_n) \subset X$ the following equivalence holds true (see e.g. \cite[Remark 2.1(iii)]{KS}, \cite[Chapter 6]{W})
$$
u_n \stackrel{\tau}{\to} u \quad \Longleftrightarrow \quad u_n^+ \to u^+ \mbox{ and } u_n^- \weakto u^-. 
$$

Let $\cJ : X \rightarrow \R$ be a nonlinear functional. For $u \in X \setminus X^-$ and $R > r > 0$ we introduce the following sets:
\begin{align*}
\cS^+_r &:= \{ u^+ \in X^+ \ : \ \|u^+\| = r \} \\
M(u) &:= \{ tu + v^- \ : \ v^- \in X^-, \ t \geq 0, \, \|tu + v^- \| \leq R \}.
\end{align*}
It is clear that $M(u) \subset \R_+ u^+ \oplus X^-$, where $\R_+ := [0,\infty)$, is a submanifold with the boundary
$$
\partial M(u) = \{ v^- \in X^- \ : \ \| v^- \| \leq R \} \cup \{tu + v^- \ : \ v^- \in X^-, \ t > 0, \ \|tu + v^- \|=R \}.
$$
Moreover, for $\alpha \leq \beta$ we introduce the following sets
$$
\cJ^\beta := \{ u \in X \ : \ \cJ(u) \leq \beta \}, \ \cJ_\alpha := \{ u \in X \ : \ \alpha < \cJ(u) \}, \ \cJ_\alpha^\beta := \cJ_\alpha \cap \cJ^\beta.
$$

We are working under the following assumptions.

\begin{itemize}
\item[(A1)] $\cJ$ is of $\cC^1$-class and $\cJ(0) = 0$.
\item[(A2)] $\cJ'$ is sequentially weak-to-weak* continuous.
\end{itemize}

Let $\cP \subset X \setminus X^-$ be a nonempty set. We assume that
\begin{itemize}
\item[(A3)] there are $\delta > 0$ and $r > 0$ such that for every $u \in \cP$ there is radius $R=R(u) > r$ with
$$
\inf_{\cS_r^+} \cJ > \max \left\{ \sup_{\partial M(u)} \cJ, \sup_{\triple{v} \leq \delta} \cJ(v) \right\}.
$$
\end{itemize}

Let
$$
\cN := \left\{ u \in X \setminus X^- \ : \ \cJ'(u)(u) = 0, \ \cJ'(u)(v) = 0 \mbox{ for all } v \in X^- \right\}
$$
denote the Nehari-Pankov manifold and we introduce its subset
$$
\cN_\cP := \cN \cap \cP.
$$

We also consider the following assumption, which allows us to compare the energy level in Theorem \ref{th:abstrExistence} with the infimum on $\cN_\cP$, if satisfied.
\begin{itemize}
\item[(A4)] For each $u \in \cN_\cP$, $v \in X^-$ and $t \geq 0$ there holds $\cJ(u) \geq \cJ(tu+v)$.
\end{itemize}

Let $A \subset X$ and $I :=[0,1]$. Let $h : A \times I \rightarrow X$. We consider the following conditions. 
\begin{itemize}
\item[(h1)] $h$ is $\tau$-continuous, i.e. $h(v_n, t_n) \stackrel{\tau}{\to} h(v, t)$ for $v_n \stackrel{\tau}{\to} v$ and $t_n \to t$;
\item[(h2)] $h(u,0)=u$ for $u \in A$;
\item[(h3)] $\cJ(u)\geq \cJ(h(u,t))$ for $(u,t) \in A \times I$;
\item[(h4)] for every $(u,t) \in A \times I$ there is an open - in the product topology of $(X, \tau)$ and $(I, |\cdot |)$ - neighborhood $W \subset X \times I$ of $(u,t)$ such that $\{v - h(v,s) \ : \ (v,s) \in W \cap (A \times I)\}$ is contained in a finite-dimensional subspace of $X$.
\end{itemize}
We additionally say that $h$ is \textit{admissible}, if it satisfy (h1) and (h4).

\begin{Th}\label{th:abstrExistence}
Suppose that $\cJ$ satisfy (A1)--(A3). Then there is a  Cerami sequence $(u_n) \subset X$ bounded away from zero, i.e. a sequence such that
\begin{equation}\label{Cer}
\sup_n \cJ(u_n) \leq c, \quad (1+\|u_n\|) \cJ'(u_n) \to 0 \mbox{ in } X^*, \quad \inf_n \triple{u_n} \geq \frac{\delta}{2},
\end{equation}
where
$$
c := \inf_{u \in \cP} \inf_{h \in \Gamma(u)} \sup_{u' \in M(u)} \cJ(h(u',1)) \geq \inf_{\cS_r^+} \cJ > 0
$$
and
$$
\Gamma(u) := \left\{ h \in \cC(M(u) \times [0,1]) \ : \ h \mbox{ satisfy (h1)--(h4)} \right\} \neq \emptyset.
$$
If additionally (A4) holds, $c \leq \inf_{\cN_\cP} \cJ$, where we define $\inf_\emptyset \cJ := +\infty$.
\end{Th}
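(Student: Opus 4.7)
The plan is to follow the $\tau$-topology linking scheme of Kryszewski--Szulkin \cite{KS} and Mederski \cite{M2}, adapted to the parametric min-max family $\{M(u)\}_{u \in \cP}$. First, $\Gamma(u) \neq \emptyset$ because the identity $h(u',t) := u'$ trivially satisfies (h1)--(h4). For the comparison $c \leq \inf_{\cN_\cP} \cJ$ under (A4), fix $u \in \cN_\cP \subset \cP$. Since (A3) continues to hold when the radius $R(u)$ is enlarged, we may assume $u \in M(u)$. Then (A4) gives $\sup_{u' \in M(u)} \cJ(u') = \cJ(u)$, and taking $h = \mathrm{id} \in \Gamma(u)$ yields $c \leq \cJ(u)$; the infimum over $\cN_\cP$ completes this part.

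For the linking inequality $c \geq \inf_{\cS_r^+} \cJ$, fix $u \in \cP$ and $h \in \Gamma(u)$, and show that $h(M(u),1) \cap \cS_r^+ \neq \emptyset$. By (h4) and $\tau$-compactness of the bounded set $M(u)$, there is a finite-dimensional subspace $Y \subset X$ with $u^+ \in Y$ such that $\{v - h(v,s) : (v,s) \in M(u) \times [0,1]\} \subset Y$. Restricting $h(\cdot,1)$ to the finite-dimensional cap $M_Y := M(u) \cap (\R_+ u^+ \oplus (Y \cap X^-))$ yields a continuous map into $Y$. By (h3) and (A3), on $\partial M_Y$ one has $\cJ(h(v,1)) \leq \cJ(v) \leq \sup_{\partial M(u)} \cJ < \inf_{\cS_r^+} \cJ$, so $h(v,1) \notin \cS_r^+$ on $\partial M_Y$. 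A Brouwer-degree argument, homotoping the restricted map to the identity on $M_Y$ through $s \mapsto h(\cdot, s)$, yields an interior $v \in M_Y$ with $h(v,1) \in \cS_r^+$, whence $\cJ(h(v,1)) \geq \inf_{\cS_r^+} \cJ$. The bound $c > 0$ follows since $\inf_{\cS_r^+} \cJ > 0$ for small $r$, as guaranteed by (A3) applied with $\triple{v} \leq \delta$.

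I then argue by contradiction to produce the Cerami sequence at level $c$. Suppose that for some $\eps_0 > 0$,
\[
(1 + \|u\|)\|\cJ'(u)\|_{X^*} \geq \eps_0 \quad \text{whenever } |\cJ(u) - c| \leq \eps_0 \text{ and } \triple{u} \geq \delta/2.
\]
Using (A2), I would construct a locally Lipschitz, $\tau$-continuous pseudo-gradient vector field $V$ on this strip, bounded in $\triple{\cdot}$ and satisfying $\langle \cJ'(u), V(u) \rangle \geq \frac{\eps_0}{4(1+\|u\|)}$, by means of a $\tau$-partition of unity supported on sets with images in finite-dimensional subspaces, as in \cite[Lemma~2.4]{KS}. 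Cutting $V$ off outside a slightly larger strip and integrating $-V$ produces a $\tau$-continuous flow $\eta : X \times [0,T] \to X$ satisfying (a) $\eta(\cdot,0) = \mathrm{id}$, (b) $t \mapsto \cJ(\eta(u,t))$ is non-increasing, (c) $\eta(\cJ^{c+\eps}, T) \cap \{\triple{\cdot} \geq \delta\} \subset \cJ^{c-\eps}$ for some $0 < \eps < \eps_0$, and (d) the increments $v - \eta(v,s)$ are locally in a finite-dimensional subspace. Then pick $u \in \cP$ and $h \in \Gamma(u)$ with $\sup_{M(u)} \cJ(h(\cdot,1)) < c + \eps$ and concatenate
\[
h'(u',t) := \begin{cases} h(u', 2t), & 0 \leq t \leq 1/2, \\ \eta(h(u',1), (2t-1)T), & 1/2 \leq t \leq 1. \end{cases}
\]
One verifies that $h' \in \Gamma(u)$: (h1), (h4) are inherited from $h$ and $\eta$ by concatenation, (h2) is immediate, and (h3) combines (h3) for $h$ with (b). On $\{u' \in M(u) : \triple{h(u',1)} \geq \delta\}$ property (c) gives $\cJ(h'(u',1)) \leq c - \eps$, while on the complement (A3) yields $\cJ(h(u',1)) \leq \sup_{\triple{v} \leq \delta}\cJ(v) < \inf_{\cS_r^+}\cJ \leq c$, so $\cJ(h'(u',1)) \leq c - \eps'$ for some $\eps' > 0$. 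A common bound contradicts the definition of $c$, and completes the proof.

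The main obstacle will be the construction of the $\tau$-continuous pseudo-gradient vector field preserving the finite-dimensional property (d): the flow must be $\tau$-continuous (which restricts dependence on $X^-$) yet decrease $\cJ$ along trajectories (a condition tied to the stronger norm $\|\cdot\|$). Reconciling the two requires the Kryszewski--Szulkin--Mederski partition-of-unity technique together with the sequential weak-to-weak$^*$ continuity in (A2), and ensuring that the whole construction is uniform over the parametric family $\{M(u)\}_{u \in \cP}$ is where the argument is most delicate.
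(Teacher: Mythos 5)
Your overall architecture is the same as the paper's: the identity map gives $\Gamma(u)\neq\emptyset$ and (with (A4)) the bound $c\leq\inf_{\cN_\cP}\cJ$; the inequality $c\geq\inf_{\cS_r^+}\cJ$ is obtained by a degree argument (your finite-dimensional Brouwer reduction is exactly the mechanism behind the Kryszewski--Szulkin degree for admissible maps that the paper invokes, applied to the auxiliary map $H(v,s)=\left(\|h(v,s)^+\|-r\right)\frac{u^+}{\|u^+\|}+h(v,s)^-$ whose zeros detect $h(v,s)\in\cS_r^+$; you do need such an auxiliary map and a splitting-adapted finite-dimensional subspace containing $u$, and the boundary condition for all $s\in[0,1]$, but these are routine); and the Cerami sequence comes from a $\tau$-continuous pseudo-gradient flow plus concatenation. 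Localizing the contradiction hypothesis to a strip around level $c$ is logically admissible (it is a weaker hypothesis, and (A3) forces $\{\triple{u}\leq\delta\}$ to lie strictly below the level $b:=\inf_{\cS_r^+}\cJ\leq c$, so the cutoff region is harmless), and your final bound $c>0$ indeed follows from (A3) together with $\cJ(0)=0$.

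The genuine gap is in the deformation step, in your flow property (c). With a vector field bounded in norm and decrease rate only of order $\eps_0/(1+\|u\|)$, there is no fixed time $T$ after which all of $\cJ^{c+\eps}$ (even intersected with $\{\triple{\cdot}\geq\delta\}$) lands in $\cJ^{c-\eps}$: points of large norm decrease arbitrarily slowly, so (c) fails as stated; if instead you normalize as the paper does (decrease rate $>1$, but $\|V(u)\|\leq\frac{4}{\eps}(1+\|u\|)$), trajectories may leave the ball $B_\rho$ on which the covering/partition construction and the gradient bound are used. What rescues the argument is that one only needs to deform the set $h(M(u),1)$, and this set is bounded in $\|\cdot\|$ --- a nontrivial fact the paper proves from (h4) together with the $\tau$-compactness of $M(u)$ (a finite subcover yields one finite-dimensional subspace containing all increments $v-h(v,1)$). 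With this bound one chooses $\rho=\rho(u,h)$ via a Gronwall estimate and $T_0=2\eps+c-b$ so that the relevant trajectories stay in $B_\rho$ up to time $T_0$, and only then does the concatenated map $g$ satisfy $\sup_{M(u)}\cJ(g(\cdot,1))\leq b-\eps\leq c-\eps$, contradicting the definition of $c$. Your proposal skips this boundedness/Gronwall bookkeeping entirely; moreover (c) is stated with the post-flow condition $\triple{\eta(\cdot,T)}\geq\delta$ but applied with the pre-flow condition $\triple{h(u',1)}\geq\delta$. The clean repair, as in the paper, is to shrink $\eps$ so that $\{\triple{u}\leq\delta\}\subset\cJ^{b-\eps}$, whence the flow decreases at unit rate on $\cJ^{c+\eps}_{b-\eps}\cap B_\rho$ and no case split on $\triple{h(u',1)}$ is needed.
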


In comparison to \cite{KS, M2} in our approach we don't require that $\cJ$ is $\tau$-upper semicontinuous, usually is checked by means of Fatou's lemma, which requires the nonnegativity of the nonlinear part of the energy functional $\cJ$. In place of this assumption we require a stronger inequality in (A3) than in \cite{KS, M2}. Our result can be treated also as a generalization of \cite{CW} - we find a Cerami sequence below the level $c > 0$, which can be compared with the infimum of $\cJ$ on the Nehari-Pankov manifold. In particular, using our approach, one can find a Cerami sequence on the least energy solution level and then find a ground state solution, provided that the nonlinear part of $\cJ$ is nonnegative. Therefore our approach applies to problems considered in \cite{KS, M2}, where we can also find ground state solutions, as well as to problems in \cite{CW}, where we can find a nontrivial solution.

\begin{proof}
\textbf{Step 1.} \textit{The family $\Gamma(u)$ is nonempty for $u \in \cP$.} \\
Fix $u \in \cP$ and consider $h : M(u) \times [0,1] \rightarrow X$ given by $h(v,t) = v$. It is clear that (h1), (h2) and (h3) are satisfied. Note that $v - h(v,s) = v-v = 0$, so that it is sufficient to take $W = X \times I$. Hence, $h \in \Gamma(u)$ and $\Gamma(u) \neq \emptyset$. \\
\textbf{Step 2.} $c \geq \inf_{\cS_r^+} \cJ$. \\
Fix $u \in \cP$ and $h \in \Gamma(u)$. Define the map
$$
H : M(u) \times [0,1] \rightarrow \R u^+ \oplus X^- \subset X
$$
by the formula
$$
H(v, t) := \left( \| h(v,t)^+ \| - r \right) \frac{u^+}{\|u^+\|} + h(v,t)^-.
$$
We will show that $H$ is admissible. (h1) is clear. To get (h4) we fix  the point $(v',t)$ and take the neighborhood $W$ for $h$ and this point. Then it is clear that
\begin{align*}
v-H(v,s) &= v - \left( \| h(v,s)^+ \| - r \right) \frac{u^+}{\|u^+\|} - h(v,s)^- \\
&= v^+ - \left( \| h(v,s)^+ \| - r \right) \frac{u^+}{\|u^+\|} + (v-h(v,s))^-
\end{align*}
and $\{ v - H(v,s) \ : \ (v,s) \in W \cap (A \times I) \}$ is contained in a finite-dimensional subspace. Observe that $H(v,t) = 0$ if and only if $h(v,t)^- = 0$ and $\| h(v,t)^+\| = r$, i.e. $h(v,t) \in \cS_r^+$. Suppose that for some $(v,t) \in \partial M (u) \times [0,1]$ we have $H(v,t) = 0$ or, equivalently, $h(v,t) \in \cS_r^+$. Then (h3) implies that
$$
\sup_{\partial M(u)} \cJ \geq \cJ(v) \geq \cJ(h(v,t)) \geq \inf_{\cS_r^+} \cJ,
$$ 
which contradicts (A3). Hence, $0 \not\in H(\partial M (u) \times [0,1])$. It is also easy to see that
$$
H(v,0) = v - r \frac{u^+}{\|u^+\|}.
$$
By the homotopy invariance and the existence property (\cite[Theorem 2.4(i)--(iii)]{KS}) of the degree we obtain
$$
\deg (H(\cdot, 1), M(u), 0) = \deg(H(\cdot, 0), M(u), 0) = \deg \left( I - r \frac{u^+}{\|u^+\|}, M(u), 0 \right) = 1,
$$
where $\deg$ denotes the topological degree defined in \cite{KS}. Hence, $\deg (H(\cdot, 1), M(u), 0) \neq 0$ and there is $v \in M(u)$ with $H(v,1) = 0$. It means that $h(v,1) \in \cS_r^+$ and therefore
$$
\sup_{u' \in M(u)} \cJ(h(u',1)) \geq \cJ(h(v,1)) \geq \inf_{\cS_r^+} \cJ > 0,
$$
and the proof of Step 2 is completed. \\
From this point, we will show the existence of a Cerami sequence, satisfying \eqref{Cer}, by a contradiction. Hence, suppose that for some $\varepsilon > 0$ there holds
$$
(1 + \|u\|) \| \cJ'(u)\| \geq \varepsilon
$$
for $u \in \cJ^{c+\varepsilon} \cap \left\{ u \in X \ : \ \triple{u} \geq \frac{\delta}{2} \right\}.$ Without loss of generality we assume that $\varepsilon < \inf_{\cS_r^+} \cJ.$
 \\
\textbf{Step 3.} \textit{The existence of a vector field in a neighborhood of $\cJ^{c+\varepsilon} \cap \left\{ u \in X \ : \ \triple{u} \geq \frac{\delta}{2} \right\}$ and the construction of the flow $\eta$.} \\
Set $Y := \cJ^{c+\varepsilon} \cap \left\{ u \in X \ : \ \triple{u} \geq \frac{\delta}{2} \right\}$ for the simplicity of notation. Let $u \in Y \cap B_\rho$, where $\rho > 0$ and $B_\rho := \{ u \in X \ : \ \|u\| \leq \rho \}$, and define
$$
w(u) := \frac{2 \nabla \cJ(u)}{\| \cJ'(u) \|^2}.
$$
Then
$$
\langle \nabla \cJ(u), w(u) \rangle = 2
$$
and
$$
\| w(u) \| = \frac{2}{\| \cJ'(u) \|} \leq \frac{2}{\varepsilon} (1+\|u\|).
$$
Then there is a $\tau$-open neighborhood $U_u$ of $u$ with
$$
\langle \nabla \cJ(v), w(u) \rangle > 1, \ \|w(u)\| \leq \frac{4}{\varepsilon} (1+\|v\|) \mbox{ for } v \in U_u,
$$
cf. \cite[Proposition 3.2, Remark 2.1(iii)]{KS}. Indeed, if $(u_n) \subset Y \cap B_\rho$ is a sequence with $u_n \stackrel{\tau}{\to} u$, then $u_n \weakto u$ in $X$. Hence, by (A2), $\cJ'(u_n)(\varphi) \to \cJ'(u)(\varphi)$ for $\varphi \in X$. Thus $\cJ'$ is sequentially $\tau$-continuous in $Y \cap B_\rho$. The obtained $\tau$-continuity of $\cJ'$ and the weak lower semi-continuity of the norm in $X$ imply the existence of a neighborhood $U_u$ as above.

Observe that the closed ball $B_\rho$ is bounded and convex, and therefore it is also $\tau$-closed. Thus $U_0 := X \setminus B_\rho$ is a $\tau$-open set and therefore the family $\cF := \{ U_u \}_{u \in Y \cap B_\rho} \cup \{ U_0 \}$ is a $\tau$-open covering of $Y$. Set
$$
\cU := \bigcup \cF.
$$
It is clear that $\cF$ is a $\tau$-open covering of the metric space $\cU$, which is paracompact. We may find a $\tau$-locally finite $\tau$-open refinement $\{\tilde{N}_j\}_{j \in J}$ of the covering $\cF$ of $\cU$. It is clear that 
$$
Y \subset \cU \subset \tilde{N} := \bigcup_{j \in J} \tilde{N}_j
$$
and $\tilde{N}$ is a $\tau$-open set. Let $\{ \lambda_j\}_{j \in J}$ denotes the $\tau$-Lipschitzian partition of unity subordinated to $\{ \tilde{N}_j \}_{j \in J}$. If $\tilde{N}_j \subset U_{u_j}$ for some $u_j$, we put $w_j := w(u_j)$. Otherwise, if $\tilde{N}_j \subset U_0$, we set $w_j = 0$. We put
$$
\tilde{V}(u) := \sum_{j\in J} \lambda_j(u) w_j, \quad u \in \tilde{N}.
$$
For every $u \in \tilde{N}$ the above sum is finite and therefore there is a $\tau$-open neighborhood $U_u \subset \tilde{N}$ of $u$ and $L_u$ such that $\tilde{V}(U_u)$ is contained in a finite-dimensional subspace of $X$ and
$$
\| \tilde{V}(v) - \tilde{V}(w) \| \leq L_u \triple{v-w}
$$
for $v,w \in U_u$. It is clear that
$$
\langle \nabla \cJ(u), \tilde{V}(u) \rangle \geq 0, \quad u \in \tilde{N}
$$
and
$$
\langle \nabla \cJ(u), \tilde{V}(u) \rangle > 1, \quad u \in  Y \cap B_\rho.
$$
Moreover, we compute that
$$
\left\| \tilde{V}(u) \right\| \leq \frac{4}{\varepsilon} (1 + \|u\|), \quad u \in \tilde{N}.
$$
Choose a smooth function $\chi : \R \rightarrow [0,1]$ such that $0 \leq \chi(t) \leq 1$ for $t \in \R$, $\chi(t) = 0$ for $t \leq \frac{2\delta}{3}$ and $\chi(t)=1$ for $t \geq \delta$. Then we set
$$
V(u) := \left\{ \begin{array}{ll}
\chi(\triple{u}) \tilde{V}(u), & \quad u \in \tilde{N}, \\
0, & \quad \triple{u} \leq \frac{2\delta}{3}.
\end{array} \right.
$$
Put $N := \cU \cup \{u \in X \ : \ \triple{u} < \delta \}$. Then $N$ is a $\tau$-neighborhood of $\cJ^{c+\eps} \cup (X \setminus B_\rho)$. It is clear that $V$ is locally Lipschitz and $\tau$-locally Lipschitz continuous; moreover
$$
\|V(u)\| \leq \frac{4}{\eps} (1+\|u\|), \quad \langle \nabla \cJ(u), V(u) \rangle \geq 0, \quad u \in N
$$
and
$$
\langle \nabla \cJ(u), V(u) \rangle > 1, \quad u \in \cJ^{c+\eps} \cap \{ u \in X \ : \ \triple{u} \geq \delta \} \cap B_\rho.
$$
As long as $V$ is locally Lipschitz-continuous, the initial value problem
$$
\left\{ \begin{array}{l}
\frac{\partial \eta}{\partial t} (u, t) = - V(\eta(u,t)) \\
\eta(u, 0) = u \in N \supset \cJ^{c+\eps} \cup (X \setminus B_\rho)
\end{array} \right.
$$
has a unique solution $\eta(u, \cdot) : [0, T^+(u)) \rightarrow X$, where $T^+(u) > 0$ is the maximal time of the existence in a positive direction. \\
\textbf{Step 4.} \textit{Properties of the flow $\eta$}. \\
Repeating the proof of \cite[Proposition 2.2]{KS} we see that $\eta$ is $\tau$-continuous. Moreover, for $u \in N$ we get that
$$
\frac{d}{dt} \cJ(\eta(u, t)) = \cJ'(\eta(u, t))(-V(\eta(u,t))) = - \langle \nabla \cJ(\eta(u,t)), V(\eta(u,t)) \rangle \leq 0,
$$
so that $\cJ$ is non-increasing on trajectories $t \mapsto \eta(u, t)$. In particular, if $u \in \cJ^{c+\eps}$, then
$$
\{ \eta(u,t) \ : \ 0 \leq t < T^+(u) \} \subset \cJ^{c+\eps}.
$$
As long as $V$ is sublinear, we see that for $u \in \cJ^{c+\eps}$ we get $T^+(u) = +\infty$. 

Moreover
\begin{align*}
\| \eta(u,t) \| &= \left\| u - \int_0^t V(\eta(u,s)) \, ds \right\| \leq \|u\| + \int_0^t \| V(\eta(u,s)) \| \, ds \\
&\leq \|u\| + \frac{4}{\varepsilon} \int_0^t 1 + \| \eta(u,s)) \| \, ds.
\end{align*}
From the Gronwall's inequality
\begin{align}\label{gronwall}
\| \eta(u,t) \| \leq (1+\|u\|) e^{\frac{4t}{\eps}} - 1.
\end{align}
Set $b := \inf_{\cS_r^+} \cJ$. Then it is clear that 
$$
\sup_{\triple{u} \leq \delta} \cJ < b - \eps.
$$
In particular 
$$
\{ u \in X \ : \ \triple{u} \leq \delta \} \subset \cJ^{b-\eps}.
$$
Hence,
$$
\cJ^{c+\eps}_{b-\eps} \cap B_\rho \subset \cJ^{c+\eps} \cap \{ u \in X \ : \ \triple{u} \geq \delta \} \cap B_\rho.
$$
Thus
\begin{equation}\label{decreasing}
\langle \nabla \cJ(u), V(u) \rangle > 1, \quad u \in \cJ^{c+\eps}_{b-\eps} \cap B_\rho.
\end{equation}
\textbf{Step 5.} \textit{Conclusion.} \\
Fix $u \in \cP$ and $h \in \Gamma(u)$ such that $\sup_{u' \in M(u)} \cJ(h(u',1)) < c + \eps$. 

We claim that $\sup_{u' \in M(u)} \| h(u',1) \| < \infty$. Indeed, $M(u)$ is $\tau$-compact and therefore (h1) implies that $h(M(u),1)$ is $\tau$-compact as well. For each $v \in M(u)$ we can find a $\tau$-open neighbourhood $W_v \subset X$ of $v$ such that $\{ w-h(w,1) \ : \ w \in W_v \}$ is contained in a finite-dimensional subspace of $X$. Since $\{ W_v \}$ is a $\tau$-open covering of $M(u)$, we can choose a finite $\tau$-open subcovering $\{\widetilde{W}_j \}$ of $M(u)$. Then $v-h(v,1) \in \mathbb{V}$ for all $v \in M(u)$ and for some finite-dimensional subspace $\mathbb{V} \subset X$. Then the set $\{ v - h(v,1) \ : \ v \in M(u) \}$ is $\tau$-compact and contained in the finite-dimensional space $\mathbb{V}$, and therefore is bounded. Since $M(u)$ is bounded, so is $h(M(u),1)$.

Set $\rho(u, h) := \left( 1 + \sup_{u' \in M(u)} \| h(u',1) \| \right) e^\frac{4T_0}{\eps} - 1$, where $T_0 := 2 \eps + c - b$. For such $\rho = \rho (u, h)$ we obtain the flow $\eta$ with above conditions. It is clear that for $u' \in M(u)$ we have $h(u',1) \in \cJ^{c+\eps}$. Moreover, from \eqref{gronwall}, we obtain that
$$
\| \eta(h(u',1), t) \| \leq (1+\|h(u',1)\|) e^{\frac{4t}{\eps}} - 1 \leq \rho(u, h)
$$
for $t \in [0, T_0]$. Hence, for $t \in [0, T_0]$ we have that $\eta(h(u',1), t) \in B_\rho$, in particular $h(u',1) \in B_\rho$. Moreover, from \eqref{decreasing}, we see that $\eta(h(u',1), T_0) \in \cJ^{b-\eps}$. Define $g :M(u) \times [0,1] \rightarrow X$ by
$$
g(u', t) := \left\{ \begin{array}{ll}
h(u',2t), & \quad t \in [0,1/2], \\
\eta(h(u',1), T_0 (2t-1)), &\quad t \in [1/2,1].
\end{array} \right.
$$
Then $g \in \Ga(u)$ and $\cJ(g(u',1)) = \cJ( \eta(h(u',1), T_0) ) \leq b - \eps \leq c - \eps$ for any $u' \in M(u)$, which is a contradiction with the definition of $c$. \\
\textbf{Step 6.} \textit{If (A4) holds, then $c \leq \inf_{\cN_\cP} \cJ$.} \\
Observe that if $\cN_\cP = \emptyset$, then $\inf_{\cN_\cP} \cJ = \infty$ and the inequality is trivial. Hence, assume that $\cN_\cP \neq \emptyset$. Take any $u \in \cN_\cP \subset \cP$ and define $h : M(u) \times [0,1] \rightarrow X$ by the formula $h(u', t) = u'$ for $u' \in M(u)$. It is clear that $h$ satisfies conditions (h1)--(h4). Then, (A4) implies that
$$
c \leq \sup_{u' \in M(u)} \cJ(h(u',1)) = \sup_{u' \in M(u)} \cJ(u') = \sup_{tu+v \in M(u)} \cJ(t u + v) \leq \cJ(u),
$$
where $t \geq 0$ and $v \in X^-$. Hence, $c \leq \inf_{\cN_\cP} \cJ$ and the proof is completed.
\end{proof}

\section{Functional setting}\label{sect:functional}

Let
$$
X := \left\{ u \in H^1 (\R^N) \ : \ u \mbox{ is } O(K)\times I \mbox{ invariant and } \int_{\R^N} \frac{u^2}{r^2} dx < +\infty \right\}.
$$
It is classical to check that under (V), the space $X$ has the orthogonal splitting $X = X^+ \oplus X^-$ such that the quadratic form
$$
\int_{\R^N} |\nabla u|^2 + a \frac{u^2}{r^2} + V(x)u^2 \, dx
$$
is positive definite on $X^+$ and negative definite on $X^-$. Hence, we may define norms on $X^+$ and $X^-$ by
$$
\|u^\pm\|^2 := \pm \int_{\R^N} |\nabla u|^2 + a \frac{u^2}{r^2} + V(x)u^2 \, dx, \quad u^\pm \in X^\pm,
$$
We define the product topology on $X$ by
$$
\|u\|^2 := \|u^+\|^2 + \|u^-\|^2,
$$
where $u = u^+ + u^-$, $u^\pm \in X^\pm$. Moreover, projections $X \to X^\pm$ are continuous in $L^q(\R^N)$ (see \cite[Proposition 7]{T}). We will denote by $\kappa \geq 1$ the constant such that
\begin{equation}\label{eq:kappa}
|u^\pm|_q \leq \kappa |u|_q
\end{equation}
for $u \in X$. In view of (V), there is a constant $\mu_0 > 0$ such that
\begin{equation}\label{mu0}
\mu_0 |u|_2 \leq \|u\|, \quad u \in X.
\end{equation}

The energy functional $\cJ : X \rightarrow \R$ associated to \eqref{eq:schroedinger} is of the form
\begin{equation}\label{eq:J}
\cJ(u) := \frac12 \|u^+\|^2 - \frac12 \|u^-\|^2 - \int_{\R^N} F(u) \, dx + \lambda \int_{\R^N} G(u) \, dx
\end{equation}
and it is classical to check that under (F1), (G1) it is of $\cC^1$ class. Note that, for $K=2$, it is not true that $\cC_0^\infty (\R^N) \subset X$. Hence, we say that $u$ is a \textit{weak solution} to \eqref{eq:schroedinger} if $u$ is a critical point of $\cJ$. For $K > 2$ the following inequality
$$
\int_{\R^N} \frac{u^2}{r^2} \, dx \leq \left( \frac{2}{K-2} \right)^2 \int_{\R^N} |\nabla u|^2 \, dx, \quad u \in H^1 (\R^N)
$$
is true (see \cite{BT}). Hence, $\int_{\R^N} \frac{u^2}{r^2} \, dx < +\infty$ for every $u \in H^1 (\R^N)$.

It is also standard to show that the derivative $\cJ'$ is weak-to-weak* continuous, i.e.
$$
\cJ'(u_n)(v) \to \cJ'(u_0)(v)
$$
if $u_n \weakto u_0$ in $X$ and $v \in X$.

We shall also introduce the energy functional and the notion of weak solutions to the curl-curl problem \eqref{eq:maxwell}. We consider the Hilbert space $H^1(\R^3; \R^3)$ and we introduce the energy functional $\cE : H^1(\R^3; \R^3)$ associated with \eqref{eq:maxwell}
\begin{equation}\label{eq:E}
\cE(\mathbf{E}) := \frac12 \int_{\R^3} |\nabla \times \mathbf{E}|^2 + V(x) |\mathbf{E}|^2 \, dx - \int_{\R^3} H(\mathbf{E}) \, dx,
\end{equation}
where we set
$$
H(\mathbf{E}) := \int_0^1 h(t \mathbf{E}) \cdot \mathbf{E} \, dt.
$$
Then $H$ is of $\cC^1$ class and we say that $\mathbf{E}$ is a \textit{weak solution} to \eqref{eq:maxwell} if $\mathbf{E}$ is a critical point of $\cE$.

In what follows we set $\tilde{f}(u) := f(u) - \lambda g(u)$. From \cite[Theorem 1.1]{Bi}, \cite[Theorem 2.1]{GMS} we have the following result.

\begin{Th}\label{th:equiv}
Let $N=3$, $K=2$, $a=1$. Suppose that (V) holds, $\tilde{f}$ is continuous and satisfy
$$
|\tilde{f}(u)| \lesssim |u|+|u|^5, \quad u \in \R.
$$
If $\mathbf{E} \in H^1 (\R^3; \R^3)$ is a weak solution to \eqref{eq:maxwell} of the form \eqref{eq:form}, where $u$ is cylindrically symmetric, then $u \in H^1 (\R^3)$ and $u$ is a weak solution to \eqref{eq:schroedinger}. If $u \in H^1 (\R^3)$ is a cylindrically symmetric, weak solution to \eqref{eq:schroedinger}, then $\mathbf{E}$ given by \eqref{eq:form} lies in $H^1(\R^3; \R^3)$ and is a weak solution to \eqref{eq:maxwell}. Moreover $\divv \mathbf{E}=0$ and $\cE(\mathbf{E})=\cJ(u)$.
\end{Th}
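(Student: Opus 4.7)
The plan is to verify the correspondence directly from the ansatz, treating the two equivalences as essentially pointwise identities that lift to the weak sense via density. First I would write $\mathbf{E} = u\,w$ where $w(x) = \tfrac{1}{r}(-x_2,x_1,0)^T$ is the smooth azimuthal unit vector field on $\R^3 \setminus \{r=0\}$, so $|w|=1$, and compute the basic pointwise identities:
\begin{align*}
|\mathbf{E}|^2 &= u^2, \\
\divv \mathbf{E} &= 0 \quad \text{(the two nonzero terms cancel by direct differentiation)}, \\
|\curl \mathbf{E}|^2 &= |\nabla u|^2 + \tfrac{u^2}{r^2}.
\end{align*}
Combined with the relation $h(\alpha w)=\bigl(f(\alpha)-\lambda g(\alpha)\bigr)w$, these give $H(\mathbf{E}) = F(u) - \lambda G(u)$ pointwise. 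Integrating over $\R^3$ immediately yields $\cE(\mathbf{E})=\cJ(u)$.

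Next, using $|\curl \mathbf{E}|^2 = |\nabla u|^2 + u^2/r^2$ together with $\divv \mathbf{E}=0$ and the identity $|\nabla \mathbf{E}|_2^2 = |\curl \mathbf{E}|_2^2 + |\divv \mathbf{E}|_2^2$ valid for $\mathbf{E}\in H^1(\R^3;\R^3)$, I obtain $\|\mathbf{E}\|_{H^1}^2 = |u|_2^2 + |\nabla u|_2^2 + \int_{\R^3} u^2/r^2\,dx$. Thus $\mathbf{E}\in H^1(\R^3;\R^3)$ if and only if $u \in H^1(\R^3)$ and $\int_{\R^3}u^2/r^2\,dx<+\infty$; the latter sits exactly inside our functional space $X$, so the regularity exchange is automatic. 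The growth hypothesis $|\tilde f(u)|\lesssim |u|+|u|^5$ with $2^*=6$ in $\R^3$ ensures that both $\cE$ and $\cJ$ are well-defined $\cC^1$ functionals, so the variational formulations make sense.

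For the forward implication, given a cylindrically symmetric weak solution $\mathbf{E} = uw$ of \eqref{eq:maxwell}, I would insert test fields of the form $\mathbf{v} = \varphi w$ with cylindrically symmetric $\varphi$ supported away from the $z$-axis into the weak formulation. The curl/curl term becomes $\int(\nabla u\cdot\nabla\varphi + u\varphi/r^2)\,dx$ by the identities above, while the other terms transfer directly, producing the weak form of \eqref{eq:schroedinger}. A standard cutoff/density argument then extends the class of admissible $\varphi$ to all of $X$. Conversely, given a cylindrically symmetric weak solution $u$ of \eqref{eq:schroedinger}, $\mathbf{E}=uw$ lies in $H^1(\R^3;\R^3)$ by the previous paragraph, and one applies Palais's principle of symmetric criticality: $\cE$ is invariant under the action of $SO(2)\times\{1\}$ combined with the induced rotation of vectors, and its critical points in the invariant subspace $\{uw : u\in X\}$ are critical points on all of $H^1(\R^3;\R^3)$.

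The main obstacle is the $K=2$ singularity: in this case $\cC_0^\infty(\R^3)$ is not contained in $X$, so one cannot naively use compactly supported smooth test fields that touch the $z$-axis. I would therefore appeal to the density arguments and symmetric-criticality reductions carried out in \cite{Bi, GMS}, which provide an appropriate approximating sequence of cylindrically symmetric functions vanishing near $\{r=0\}$ and justify both the passage between weak formulations and the representation of admissible variations of $\mathbf{E}$ by variations of the scalar profile $u$. With those tools in hand, the theorem reduces to the pointwise identities above.
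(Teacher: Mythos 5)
The paper itself offers no proof of this theorem: it is imported wholesale from \cite[Theorem 1.1]{Bi} and \cite[Theorem 2.1]{GMS}, and your sketch essentially reconstructs the argument carried out in those references (ansatz computation, $\divv \mathbf{E}=0$, energy identity, transfer of weak formulations), while deferring the delicate $K=2$ steps to the same sources the paper cites. In that sense your route is consistent with the paper's treatment, but two points in your outline are inaccurate as written and are exactly where the real work lies. First, $|\curl \mathbf{E}|^2 = |\nabla u|^2 + u^2/r^2$ is \emph{not} a pointwise identity: with $\mathbf{E}=u(r,x_3)\,w$ one gets $|\curl\mathbf{E}|^2 = u_{x_3}^2 + \bigl(u_r + u/r\bigr)^2$, which differs from $|\nabla u|^2 + u^2/r^2$ by the cross term $2uu_r/r$; this term disappears only after integration, and only because $\int_{\R^3} u^2/r^2\,dx<\infty$ forces the boundary contribution $u^2$ at $r=0$ to vanish — this is precisely the $K=2$ singularity you mention, so it cannot be treated as a free pointwise fact (by contrast, $|\nabla\mathbf{E}|^2 = |\nabla u|^2 + u^2/r^2$ does hold pointwise). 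Second, your appeal to Palais' symmetric criticality with the $SO(2)$-equivariant action alone is insufficient: the fixed-point subspace of $\mathbf{E}\mapsto R^{-1}\mathbf{E}(R\,\cdot)$ consists of all fields $v_r(r,x_3)\hat r + v_\theta(r,x_3)\hat\theta + v_{x_3}(r,x_3)\hat e_3$, which is strictly larger than $\{u\,w : u \in X\}$, so criticality on the scalar slice does not follow from $SO(2)$-invariance by itself; one needs either an additional reflection-type symmetry singling out the $\hat\theta$-component (as in \cite{BDPR, GMS}) or the direct test-field computation of \cite{Bi}. Since you explicitly lean on \cite{Bi, GMS} for these reductions, your proposal is acceptable as a summary of the known proof, but as an independent argument it has these two genuine gaps.
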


Below we collect some useful properties of nonlinear functions $f$ and $g$. By $(F1)-(F2)$ we deduce that for every $\eps > 0$ there exists $C_{\eps} > 0$ such that
\begin{equation}
\label{growth_f}
	|f(u)| \leq \eps|u| + C_{\eps}|u|^{p-1}.
\end{equation}
Similarly, $(G1)-(G2)$ imply
\begin{equation}
\label{growth_g}
	|g(u)| \leq \eps|u| + C_{G,\eps}|u|^{q-1}.
\end{equation}

By (F3), (F4), (G3), we obtain also the following two conditions
\begin{align}
\label{AR_f}
	0 &\leq qF(u) \leq f(u)u \\
	\label{AR_g}
	 0 &\leq g(u)u \leq qG(u).
\end{align}

\begin{Lem}
For every $\eps > 0$ there exists $C_{F,\eps} > 0$ such that
\begin{equation}
\label{F_below}
	F(u) \geq C_{F,\eps}|u|^q - \eps|u|^2,
\end{equation}
for any $u \in \R^N$, with $ q \in (2,p)$ given in (F3).
\end{Lem}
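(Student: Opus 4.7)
The plan is to recast the desired inequality as a single statement about positivity of an infimum. Given $\eps>0$, define
$$
\phi(u) := \frac{F(u)+\eps|u|^2}{|u|^q}, \qquad u\in\R\setminus\{0\}.
$$
If I can show that $\phi$ is strictly positive on $\R\setminus\{0\}$ and that $C_{F,\eps}:=\inf_{u\neq 0}\phi(u)>0$, then by definition $F(u)+\eps|u|^2\geq C_{F,\eps}|u|^q$ for all $u\neq 0$, and the claimed inequality follows (the case $u=0$ being trivial since both sides vanish).

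First, I would verify continuity and strict positivity of $\phi$ on $\R\setminus\{0\}$. Continuity is immediate from continuity of $F$ (which follows from (F1)). Positivity uses (F3), which gives $F(u)\geq 0$, together with $\eps|u|^2>0$ for $u\neq 0$.

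Next, I would analyze the behavior of $\phi$ at the two ends. As $|u|\to\infty$, (F3) yields $F(u)/|u|^q\to+\infty$, and since $q>2$ the term $\eps|u|^2/|u|^q=\eps|u|^{2-q}\to 0$, so $\phi(u)\to+\infty$. As $u\to 0$, $F(u)/|u|^q\geq 0$ and, crucially, $\eps|u|^2/|u|^q=\eps|u|^{2-q}\to+\infty$ because $q-2>0$; thus $\phi(u)\to+\infty$ as well. Consequently, there exist $0<r<R$ such that $\phi(u)\geq 1$ whenever $|u|\notin[r,R]$, while on the compact set $\{r\leq|u|\leq R\}$ the continuous positive function $\phi$ attains a positive minimum $m>0$. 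Setting $C_{F,\eps}:=\min\{1,m\}>0$ completes the argument.

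I do not foresee a genuine obstacle here; the only point deserving care is making sure that the behavior at the origin gives $\phi\to+\infty$ rather than a finite limit, which is exactly where the strict inequality $q>2$ from (F3) is used. Note that (F2) is not actually needed for this lemma — the hypotheses $F\geq 0$ and the super-$q$ growth from (F3), combined with $q>2$, already suffice.
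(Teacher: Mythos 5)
Your proposal is correct and follows essentially the same route as the paper: the authors also consider $A(u) := \bigl(F(u)+\eps u^2\bigr)/|u|^q$, note that it is continuous, positive, and tends to $+\infty$ both as $u \to 0$ and as $|u|\to+\infty$, and take its positive minimum as $C_{F,\eps}$. Your write-up merely makes the compactness step explicit, which is a harmless elaboration of the same argument.
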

\begin{proof}
Fix $\varepsilon > 0$ and consider the function $A : \R \setminus \{0\} \rightarrow \R$ given by
$$
A(u) := \frac{F(u) + \varepsilon u^2}{|u|^q}. 
$$
It is clear that $A$ is continuous, $A(u) > 0$ for $u \neq 0$, $\lim_{u\to 0} A(u) = +\infty$, and $\lim_{|u|\to+\infty} A(u) = +\infty$. Hence, $A$ has a positive minimum and it is sufficient to set
$$
C_{F,\eps} := \min_{u \in \R \setminus \{0\}} A(u).
$$
\end{proof}

Observe that, without the loss of generality, we may assume that $C_{G,\eps} \geq C_{F,\eps}$, where $C_{G,\eps}$ is given in \eqref{growth_g} and $C_{F,\eps}$ is given in \eqref{F_below}.

\section{Existence of a Cerami sequence - verification of (A1)--(A3)}\label{sect:3}

We will show that there is a bounded away from zero, Cerami sequence for $\cJ$. Hence, we will check conditions (A1)--(A3) of Theorem \ref{th:abstrExistence} with
$$
\cP := X^+ \setminus \{0\}.
$$
It is clear that (A1), (A2) are satisfied, so we will focus on (A3) and we divide the proof into three steps:
\begin{itemize}[leftmargin=2cm]
\item[\textit{Step 1.}] there is $r > 0$ such that $\inf_{\cS_r^+} \cJ > 0$;
\item[\textit{Step 2.}] for $u \in \cP$, there is radius $R(u) > r$ such that $\sup_{\partial M(u)} \cJ \leq 0$;
\item[\textit{Step 3.}] there is $\delta > 0$ such that $\sup_{\triple{u} \leq \delta} \cJ(u) < \inf_{\cS_r^+} \cJ$.
\end{itemize}

To show \textit{Step 1}, fix $u^+ \in X^+$ and note that, by \eqref{growth_f} and Sobolev embeddings, 
\begin{align*}
\cJ(u^+) \geq \frac12 \|u^+\|^2 - \int_{\R^N} F(u^+) \, dx \geq \frac12 \|u^+\|^2 - \varepsilon C \|u^+\|^2 - \widetilde{C_\varepsilon} \|u^+\|^{p}
\end{align*}
for some $C, \widetilde{C_\eps} > 0$. Choosing sufficiently small $\varepsilon > 0$ and $r > 0$ we easily obtain that
$$
\inf_{\cS_r^+} \cJ \geq  \frac{r^2}{4} > 0.
$$

To check Steps 2 and 3 we need to assume that
\begin{equation}\label{eq:lambda_estimate}
\lambda < \frac{1}{\kappa 2^q} \frac{C_{F,\mu_0 / 8}}{C_{G,\mu_0 / 8}},
\end{equation}
where $C_{F, \mu_0 / 8} \leq C_{G,\mu_0 / 8}$ are given in \eqref{F_below}, \eqref{growth_g}.

Fix $u \in \cP$. We will show that under \eqref{eq:lambda_estimate}, $\sup_{\partial M(u)} \cJ \leq 0$ for sufficiently large $R(u)$. Note that \eqref{eq:lambda_estimate} implies that $\lambda \leq 1$. Take $u_n \in \R^+ u \oplus X^-$. Then $u_n = t_n u + u_n^-$ for some $t_n \geq 0$, $u_n^- \in X^-$. Without the loss of generality we may assume that $\|u\|=1$. Using \eqref{growth_g}, \eqref{F_below} and \eqref{mu0} we get
\begin{align*}
\cJ(u_n) &= \cJ(t_n u + u_n^-) = \frac12 t_n^2 - \frac12 \|u_n^-\|^2 - \int_{\R^N} F(u_n) \, dx + \lambda \int_{\R^N} G(u_n) \, dx \\
&\leq \frac12 t_n^2 - \frac12 \|u_n^-\|^2 - C_{F,\eps} |u_n|^q_q + \eps |u_n|^2_2 + \lambda \eps |u_n|_2^2  + \lambda C_{G,\eps} |u_n|_q^q \\
&\leq \left( -\frac12 + \frac{\eps+\lambda \eps}{\mu_0} \right) \|u_n^-\|^2 + \left( \frac12 + \frac{\eps+\lambda \eps}{\mu_0} \right) t_n^2 - C_{F,\eps} |t_n u+u_n^-|_q^q + \lambda C_{G,\eps} |t_n u+u_n^-|_q^q \\
&\leq \left( -\frac12 + \frac{2\eps}{\mu_0} \right) \|u_n^-\|^2 + \left( \frac12 + \frac{2\eps}{\mu_0} \right) t_n^2 - C_{F,\eps} |t_n u+u_n^-|_q^q + \lambda C_{G,\eps} |t_n u+u_n^-|_q^q.
\end{align*}
Choosing $\eps := \frac{\mu_0}{8}$ and using \eqref{eq:kappa}, we get
\begin{align*}
&\quad \left( -\frac12 + \frac{2\eps}{\mu_0} \right) \|u_n^-\|^2 + \left( \frac12 + \frac{2\eps}{\mu_0} \right) t_n^2 - C_{F,\eps} |t_n u+u_n^-|_q^q + \lambda C_{G,\eps} |t_n u+u_n^-|_q^q \\
&\leq -\frac14 \|u_n^-\|^2 + \frac34 t_n^2 - C_{F,\mu_0 / 8} |t_n u + u_n^-|_q^q + \lambda C_{G, \mu_0 / 8} |t_n u + u_n^-|_q^q \\
&\leq -\frac14 \|u_n^-\|^2 + \frac34 t_n^2 - \frac{C_{F,\mu_0 / 8}}{2 \kappa}  |u|_q^q t_n^q  - \frac{C_{F,\mu_0 / 8}}{2\kappa} |u_n^-|_q^q \\
&\quad + 2^{q-1} \lambda C_{G,\mu_0 / 8} |u|_q^q t_n^q + 2^{q-1}\lambda C_{G,\mu_0 / 8} |u_n^-|_q^q \\
&= -\frac14 \|u_n^-\|^2 + \frac34 t_n^2 + \left( 2^{q-1} \lambda C_{G,\mu_0 / 8} - \frac{C_{F,\mu_0 / 8}}{2 \kappa} \right) |u|_q^q t_n^q \\
&\quad  + \left( 2^{q-1}\lambda C_{G,\mu_0 / 8} - \frac{C_{F,\mu_0 / 8}}{2\kappa} \right) |u_n^-|_q^q.
\end{align*}
Using \eqref{eq:lambda_estimate} we finally arrive at
$$
\cJ(u_n) \leq  -\frac14 \|u_n^-\|^2 + \frac34 t_n^2 + \left( 2^{q-1} \lambda C_{G,\mu_0 / 8} - \frac{C_{F,\mu_0 / 8}}{2 \kappa} \right) |u|_q^q t_n^q.
$$
Hence, $\cJ(u_n) \to -\infty$ as $\|t_n u + u_n^-\| \to +\infty$. In particular, for $t_n = 0$, we get $\cJ(u_n^-) \leq 0$. Thus $\sup_{X^-} \cJ \leq 0$ and $\sup_{\partial M(u)} \cJ \leq 0$ for sufficiently large $R(u)$, and the proof of \textit{Step 2} is completed. To get \textit{Step 3} we use \eqref{growth_g}, \eqref{F_below} and \eqref{mu0}, and compute
\begin{align*}
\cJ(u) &\leq \frac12 \|u^+\|^2 - \frac12 \|u^-\|^2 - C_{F,\eps} |u|_q^q + \eps |u|_2^2 + \lambda \eps |u|_2^2 + \lambda C_{G,\eps} |u|_q^q \\
&\leq \left( \frac12 + \frac{\eps + \lambda \eps}{\mu_0} \right) \|u^+\|^2 - \left(\frac12 - \frac{\eps+\lambda \eps}{\mu_0} \right) \|u^-\|^2 - C_{F,\eps} |u|_q^q + \lambda C_{G,\eps} |u|_q^q.
\end{align*}
Note that \eqref{eq:lambda_estimate} implies that $\lambda < \frac{C_{F,\mu_0 / 8}}{C_{G,\mu_0 / 8}} \leq 1$ and therefore, for $\eps := \frac{\mu_0}{8}$,
\begin{align*}
\cJ(u) &\leq \left( \frac12 + \frac{2\eps}{\mu_0} \right) \|u^+\|^2 - \left(\frac12 - \frac{2\eps}{\mu_0} \right) \|u^-\|^2 \leq \frac34 \|u^+\|^2 \leq \frac34 \triple{u}^2 \to 0
\end{align*}
as $\triple{u} \to 0$, and the proof of \textit{Step 3} is completed.

\begin{Rem}
In the case $\lambda = 0$ we are able to show also the condition (A4), which follows from the inequality (see e.g. \cite[Lemma 3.2]{M2})
\begin{equation}\label{eq:J_ineq}
\cJ(u) \geq \cJ(tu+v) - \cJ'(u) \left( \frac{t^2-1}{2}u + tv \right), \quad u \in X, \ v \in X^-, \ t \geq 0,
\end{equation}
and obtain an additional estimate of $c$ in terms of the Nehari-Pankov manifold. Indeed, if $\lambda = 0$ the nonlinear part of the functional is nonnegative, we take $\cP := X \setminus X^-$ and then $\cN_\cP = \cN$. Hence, for any $t \geq 0$, $u \in \cN$ and $v \in X^-$ we get 
$$
\cJ'(u) \left( \frac{t^2-1}{2}u + tv \right) = \frac{t^2-1}{2} \cJ'(u)(u) + t \cJ'(u)(v) = 0
$$
and \eqref{eq:J_ineq} leads to (A4). Then, from Theorem \ref{th:abstrExistence}, we obtain also that $c \leq \inf_\cN \cJ$.
\end{Rem}

\section{Boundedness of Cerami-type sequences}\label{sect:5}

Now, we are going to discuss the boundedness of a Cerami sequence for $\cJ$.

\begin{Lem}\label{lem:cerami_bdd}
Suppose that $\lambda > 0$ and $\rho > 0$ in (F5) are sufficiently small. Let $(u_n) \subset X$ satisfy
$$
\cJ(u_n) \leq \beta, \quad (1+\|u_n\|) \cJ'(u_n) \to 0
$$
for some $\beta \in \R$. Then $(u_n)$ is bounded in $X$. In particular, any Cerami sequence for $\cJ$ is bounded.
\end{Lem}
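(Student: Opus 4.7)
The plan is to argue by contradiction: suppose $\|u_n\|\to+\infty$ and normalize $v_n := u_n/\|u_n\|$, so $\|v_n\|=1$ and, along a subsequence, $v_n\weakto v$ in $X$ and a.e. Since $\|u_n^\pm\|\le\|u_n\|$, the Cerami hypothesis forces $\cJ'(u_n)u_n \to 0$ and $\cJ'(u_n)u_n^\pm \to 0$; expanded via \eqref{eq:J}, these read
\[
\|u_n^+\|^2 = \int_{\R^N}\tilde f(u_n)u_n^+\,dx + o(1),\qquad \|u_n^-\|^2 = -\int_{\R^N}\tilde f(u_n)u_n^-\,dx + o(1),
\]
with $\tilde f = f-\lambda g$. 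Forming the combination $q\cJ(u_n)-\cJ'(u_n)u_n$, using $\cJ(u_n)\le \beta$ together with \eqref{AR_f}--\eqref{AR_g}, I would observe that each of the three nonnegative summands
\[
\bigl(\tfrac{q}{2}-1\bigr)\bigl(\|u_n^+\|^2-\|u_n^-\|^2\bigr),\qquad \int\!\bigl(f(u_n)u_n-qF(u_n)\bigr)\,dx,\qquad \lambda\int\!\bigl(qG(u_n)-g(u_n)u_n\bigr)\,dx
\]
is bounded by a universal constant; in particular $\|u_n^+\|^2-\|u_n^-\|^2\le C$. Combining this with $\|u_n^+\|^2-\|u_n^-\|^2=\int\tilde f(u_n)u_n\,dx+o(1)$, \eqref{AR_f}, (G1), and the Sobolev bound $\int G(u_n)\le C\|u_n\|^q$, I extract the auxiliary estimate $\int F(u_n)\,dx\le C(1+\lambda\|u_n\|^q)$, and then via \eqref{F_below} and (F5) the $L^p$-control $|u_n|_p^p\le C(1+\lambda\|u_n\|^q+\rho^{p-2}\|u_n\|^2)$, so in particular $|v_n|_p\to 0$.

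Next I would apply a Lions-type concentration-compactness lemma to $(v_n)$ adapted to the $O(K)\times I$-symmetric, $\Z^{N-K}$-periodic setting. In the \emph{non-vanishing} case, by (V) the lattice $\{0\}\times\Z^{N-K}$ acts on $X$ preserving $\cJ$ and the splitting $X^\pm$; after translating by a suitable $(0,z_n)$ the normalized sequence converges weakly and a.e.\ to some $\tilde v\ne 0$, and on the set $\{\tilde v\ne 0\}$ one has $|u_n|\to\infty$. Invoking (F3) and Fatou's lemma gives $\int F(u_n)/\|u_n\|^q\,dx\to+\infty$, which contradicts the bound $\int F(u_n)/\|u_n\|^q\le C\lambda+o(1)$ derived above.

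In the \emph{vanishing} case, $v_n\to 0$ strongly in $L^s(\R^N)$ for every $s\in(2,2^*)$. Testing the Cerami identity with $u_n^+-u_n^-$ produces
\[
\|u_n\|^2 = \int_{\R^N}\tilde f(u_n)(u_n^+-u_n^-)\,dx+o(1),
\]
and I would split the integration along the threshold $\rho$ of (F5). On $\{|u_n|\le\rho\}$ the conditions (F2)--(G2) give $|\tilde f(u)|\le\omega(\rho)|u|$ with $\omega(\rho)\to 0$, which via \eqref{mu0} and \eqref{eq:kappa} controls the low-intensity contribution by $C\omega(\rho)\|u_n\|^2$; on $\{|u_n|\ge\rho\}$ the conditions (F5) and (G1) give $|\tilde f(u_n)|\le C(|u_n|^{p-1}+\lambda|u_n|^{q-1})$, and the resulting integral is controlled by combining the $L^p$-bound from the first paragraph with the $L^q$-vanishing supplied by Lions. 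Dividing by $\|u_n\|^2$ and letting $n\to\infty$ yields an inequality of the shape $1\le C\omega(\rho)+C\lambda+o(1)$, which is strictly less than $1$ once $\lambda$ and $\rho$ are sufficiently small---the desired contradiction.

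The main obstacle is the vanishing case: after splitting, the large-intensity contribution carries the diverging factor $\|u_n\|^{p-2}$, and beating it requires the decay rate of $|v_n|_p$ to be sufficiently fast. Closing the estimate uses in an essential way the joint smallness of $\rho$ (shrinking the low-intensity portion) and of $\lambda$ (shrinking $\int g(u_n)u_n$ relative to $\int f(u_n)u_n$), coupled to the a priori $L^p$-bound and the Lions-type $L^q$-vanishing; this interplay is precisely what the hypothesis ``$\lambda$ and $\rho$ sufficiently small'' is designed to enable.
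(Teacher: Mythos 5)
Your preliminary estimates and the non-vanishing half of the dichotomy work (apart from one slip: in $q\cJ(u_n)-\cJ'(u_n)(u_n)$ the summand $(\tfrac q2-1)(\|u_n^+\|^2-\|u_n^-\|^2)$ is \emph{not} nonnegative, since the problem is strongly indefinite; however the only consequence you actually use, $\|u_n^+\|^2-\|u_n^-\|^2\le C$, does follow from the nonnegativity of the other two summands, so this is not load-bearing). The genuine gap is the vanishing case, and it is precisely the obstacle you name at the end without overcoming it. After dividing $\|u_n\|^2=\int_{\R^N}\tilde f(u_n)(u_n^+-u_n^-)\,dx+o(1)$ by $\|u_n\|^2$, the high-intensity contribution is of size $\kappa|u_n|_p^p/\|u_n\|^2+\lambda\kappa|u_n|_q^q/\|u_n\|^2$. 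Your a priori bound $|u_n|_p^p\le C(1+\lambda\|u_n\|^q+\rho^{p-2}\|u_n\|^2)$ only yields $|u_n|_p^p/\|u_n\|^2\le C\bigl(o(1)+\lambda\|u_n\|^{q-2}+\rho^{p-2}\bigr)$, and $\lambda\|u_n\|^{q-2}\to+\infty$ for every fixed $\lambda>0$, no matter how small; likewise the Lions lemma gives $|v_n|_q\to0$ with no rate, so $\lambda|v_n|_q^q\|u_n\|^{q-2}$ is not controlled either. Hence the claimed inequality ``$1\le C\omega(\rho)+C\lambda+o(1)$'' is not established, and smallness of $\lambda$ and $\rho$ alone cannot rescue it: the loss originates in estimating $\lambda\int g(u_n)u_n$ crudely by $C\lambda\|u_n\|^q$ via \eqref{AR_g} and Sobolev.

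The paper closes exactly this point by a different device, and needs no concentration-compactness at all: it uses the combination $\cJ(u_n)-\tfrac12\cJ'(u_n)(u_n)\le\beta+o(1)$, in which the quadratic parts cancel identically, so no term $\|u_n^\pm\|^2$ or $\lambda\|u_n\|^q$ ever enters; on $\{|u_n|\ge\rho\}$ the $g$-term is absorbed into the $f$-term using the monotonicity of $g/f$ coming from (F4), (F5), (G3), namely $f(u)u-\lambda g(u)u\ge\bigl(1-\lambda g(\rho)/f(\rho)\bigr)f(u)u\gtrsim|u|^p$, while on $\{|u_n|<\rho\}$ the integrand $\Phi(u_n)=\tfrac12 f(u_n)u_n-F(u_n)+\lambda G(u_n)-\tfrac\lambda2 g(u_n)u_n$ is bounded by $\sup_{|t|\le\rho}\frac{|\Phi(t)|}{t^2}\,u_n^2$, which is small relative to $\|u_n\|^2$ for small $\rho$. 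This produces $\int_{\{|u_n|\ge\rho\}}|u_n|^p\le C\beta+\eps(\rho)\|u_n\|^2+o(1)$ with $\eps(\rho)$ small, and plugged into your splitting it gives $\|u_n\|^2\le K\|u_n\|^2/\mu_0+\widetilde C$ with $K<\mu_0$, contradicting $\|u_n\|\to\infty$ directly. If you wish to keep your normalization-plus-dichotomy structure, you must replace your $L^p$-bound by an estimate of this sharper type; as written, the vanishing case does not close.
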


\begin{proof}
Suppose by contradiction that $\|u_n\|\to\infty$ and note that
$$
\|u_n\|^2 = \|u_n^+\|^2 + \|u_n^-\|^2 = \int_{\R^N} (f(u_n) - \lambda g(u_n)) (u_n^+ - u_n^-) \, dx + o(1),
$$
and
\begin{align*}
&\quad \int_{\R^N} (f(u_n) - \lambda g(u_n)) (u_n^+ - u_n^-) \, dx\\
&= \underbrace{\int_{|u_n| < \rho} (f(u_n) - \lambda g(u_n)) (u_n^+ - u_n^-) \, dx}_{=:I_1} + \underbrace{\int_{|u_n| \geq \rho} (f(u_n) - \lambda g(u_n)) (u_n^+ - u_n^-) \, dx}_{=:I_2}.
\end{align*}
To estimate $I_1$ we fix $\eps > 0$, and for some constant $C_\eps > 0$ we obtain
\begin{align*}
I_1 &\leq \int_{|u_n| < \rho} |f(u_n)-\lambda g(u_n)| \left| u_n^+ - u_n^- \right| \, dx  \\
&\leq \eps (1+\lambda) \int_{|u_n| < \rho} |u_n| \left| u_n^+ - u_n^- \right| \, dx + C_\eps \int_{|u_n|<\rho} |u_n|^{p-1} \left| u_n^+ - u_n^- \right| \, dx \\
&\quad  + C_\eps \lambda \int_{|u_n|<\rho} |u_n|^{q-1} \left| u_n^+ - u_n^- \right| \, dx \\
&\leq \left( \eps(1+\lambda) + C_\eps \rho^{p-2} + \lambda C_\eps \rho^{q-2} \right) \int_{|u_n| < \rho} |u_n| \left| u_n^+ - u_n^- \right| \, dx \\
&= \left( \eps(1+\lambda) + C_\eps \rho^{p-2} + \lambda C_\eps \rho^{q-2} \right) \int_{|u_n| < \rho} |u_n^+|^2 - |u_n^-|^2 \, dx \\
&\leq \left( \eps(1+\lambda) + C_\eps \rho^{p-2} + \lambda C_\eps \rho^{q-2} \right) \int_{|u_n| < \rho} |u_n^+|^2 \, dx \\
&\leq \frac{1}{\mu_0} \left( \eps(1+\lambda) + C_\eps \rho^{p-2} + \lambda C_\eps \rho^{q-2} \right) \|u_n^+\|^2 \leq \frac{1}{\mu_0} \left( \eps(1+\lambda) + C_\eps \rho^{p-2} + \lambda C_\eps \rho^{q-2} \right) \|u_n\|^2.
\end{align*}

To estimate $I_2$, we observe that (F1), (F4), (F5), (G1), (G3) imply that $$\{ |u| \geq \rho \} \ni u \mapsto \frac{g(u)}{f(u)} \in \R$$ is well-defined, nonincreasing, nonnegative and even. Hence,
$$
\frac{g(\rho)}{f(\rho)} \geq \left| \frac{g(u)}{f(u)} \right|, \quad |u| \geq \rho.
$$
Hence,
\begin{align*}
I_2 &\leq \int_{|u_n| \geq \rho} |f(u_n) - \lambda g(u_n)| \left( |u_n^+|+|u_n^-| \right) \, dx \\
&= \int_{|u_n| \geq \rho} |f(u_n)| \left| 1 - \lambda \frac{g(u_n)}{f(u_n)} \right| \left( |u_n^+| + |u_n^-| \right) \, dx \\
&\leq \left(1+\lambda \frac{g(\rho)}{f(\rho)} \right) \int_{|u_n| \geq \rho} |f(u_n)| \left( |u_n^+| + |u_n^-| \right) \, dx \\
&\leq C  \left(1+\lambda \frac{g(\rho)}{f(\rho)} \right) \int_{|u_n| \geq \rho}  |u_n|^{p-1}  \left( |u_n^+| + |u_n^-| \right) \, dx \\
&\leq C  \left(1+\lambda \frac{g(\rho)}{f(\rho)} \right)  2\kappa \int_{\R^N} |u_n|^{p}  \, dx.
\end{align*}
To estimate the $L^p$-norm of $u_n$ we observe the following
\begin{align*}
\beta + o(1) &\geq \cJ(u_n) - \frac12 \cJ'(u_n)(u_n) = \int_{\R^N} \Phi(u_n) \, dx,
\end{align*}
where we set
$$
\Phi(u) := \frac12 f(u)u - F(u) + \lambda G(u) - \frac{\lambda}{2} g(u)u
$$
for a simplicity of notation. Using (F5), \eqref{AR_f}, \eqref{AR_g} and choosing $\lambda$ so small that $1-\lambda \frac{g(\rho)}{f(\rho)} > 0$, we get
\begin{align*}
&\quad \beta + o(1) + \int_{|u_n| < \rho} |\Phi(u)| \, dx \geq \beta + o(1) - \int_{|u_n| < \rho} \Phi(u) \, dx \\
&= \beta + o(1) - \int_{\R^N} \Phi(u_n) \, dx + \int_{|u_n| \geq \rho} \Phi(u_n) \, dx \geq \int_{|u_n| \geq \rho} \Phi(u_n) \, dx \\
&= \int_{|u_n| \geq \rho} \left[ \frac12 f(u_n)u_n - F(u_n) + \lambda G(u_n) - \frac12 g(u_n)u_n \right] \, dx \\
&\geq \left( \frac12 - \frac1q \right) \int_{|u_n| \geq \rho}  f(u_n)u_n - \lambda g(u_n)u_n \, dx \\
&= \left( \frac12 - \frac1q \right) \int_{|u_n| \geq \rho} \left(1 - \lambda \frac{g(u_n)}{f(u_n)} \right) f(u_n) u_n \, dx \\
&\geq \left( \frac12 - \frac1q \right) \left(1 - \lambda \frac{g(\rho)}{f(\rho)} \right) \int_{|u_n| \geq \rho}  f(u_n) u_n \, dx \\
&\gtrsim \left( \frac12 - \frac1q \right) \left(1 - \lambda \frac{g(\rho)}{f(\rho)} \right) \int_{|u_n| \geq \rho} |u_n|^p \, dx.
\end{align*}
Thus
\begin{equation}\label{1}
\int_{|u_n| \geq \rho} |u_n|^p \, dx \leq C \left(1 - \lambda \frac{g(\rho)}{f(\rho)} \right)^{-1} \left( \beta + \int_{|u_n| < \rho} |\Phi(u_n)| \, dx \right) + o(1)
\end{equation}
for some constant $C > 0$ independent of $n$, $\lambda$ and $\rho$. Therefore, using \eqref{1} we get
\begin{align*}
I_2 &\leq \underbrace{ C \left(1+\lambda \frac{g(\rho)}{f(\rho)} \right)   2\kappa}_{=:D(\lambda, \rho, \eps)} \left( \int_{|u_n| < \rho} |u_n|^{p}  \, dx + \int_{|u_n| \geq \rho} |u_n|^{p}  \, dx\right) \\
&\leq D(\lambda, \rho, \eps) \left( \int_{|u_n| < \rho} |u_n|^{p}  \, dx + C \left(1 - \lambda \frac{g(\rho)}{f(\rho)} \right)^{-1} \left( \beta + \int_{|u_n| < \rho} |\Phi(u_n)| \, dx \right)  \right) + o(1) \\
&\leq D(\lambda,\rho,\eps) \left( \rho^{p-2} |u_n|_2^2 + C \left(1 - \lambda \frac{g(\rho)}{f(\rho)} \right)^{-1} \left( \beta + \sup_{|t| \leq \rho} \frac{|\Phi(t)|}{t^2} |u_n|_2^2 \right) \right)+ o(1) \\
&\leq D(\lambda,\rho,\eps) \left( \frac{\rho^{p-2}}{\mu_0} \|u_n\|^2 + \frac{C \beta}{\left(1 - \lambda \frac{g(\rho)}{f(\rho)} \right)}  + \frac{C}{\left(1 - \lambda \frac{g(\rho)}{f(\rho)} \right) \mu_0} \sup_{|t| \leq \rho} \frac{|\Phi(t)|}{t^2} \|u_n\|^2 \right)+ o(1) \\
&\leq D(\lambda,\rho,\eps) \left( \frac{\rho^{p-2}}{\mu_0} + \frac{C}{\left(1 - \lambda \frac{g(\rho)}{f(\rho)} \right) \mu_0} \sup_{|t| \leq \rho} \frac{|\Phi(t)|}{t^2} \right) \|u_n\|^2 + \widetilde{C}
\end{align*}
for some $\widetilde{C} = \widetilde{C}(\lambda, \rho, \eps) > 0$. Finally
\begin{align*}
&\quad \|u_n\|^2 = I_1 + I_2 + o(1)\leq \frac{K}{\mu_0} \|u_n\|^2 + \widetilde{C},
\end{align*}
where 
\begin{align*}
&\quad K := \eps (1+\lambda) + C_\eps \rho^{p-2} + \lambda C_\eps \rho^{q-2}   + D(\lambda,\rho,\eps) \left( \rho^{p-2} + \frac{C}{1 - \lambda \frac{g(\rho)}{f(\rho)} } \sup_{|t| \leq \rho} \frac{|\Phi(t)|}{t^2} \right) \\
&= \eps (1+\lambda) + C_\eps \rho^{p-2} + \lambda C_\eps \rho^{q-2}   + C 2\kappa  \rho^{p-2} + C 2\kappa \lambda \frac{g(\rho)}{f(\rho)}  \rho^{p-2} + C  \frac{1+\lambda \frac{g(\rho)}{f(\rho)}}{1 - \lambda \frac{g(\rho)}{f(\rho)}} \sup_{|t| \leq \rho} \frac{|\Phi(t)|}{t^2} 
\end{align*}
Hence, the proof is completed if $K < \mu_0$. It is clear that $\lim_{t \to 0} \frac{|\Phi(t)|}{t^2} = 0$, hence $\sup_{|t| \leq \rho} \frac{|\Phi(t)|}{t^2}$ can be arbitrarily small for small $\rho$. We recall that we already need \eqref{eq:lambda_estimate}, and in particular $\lambda \leq 1$. Hence,
$$
K \leq 2\eps  + C_\eps \rho^{p-2} +  C_\eps \rho^{q-2}   + C 2\kappa  \rho^{p-2} + C 2\kappa \lambda \frac{g(\rho)}{f(\rho)}  \rho^{p-2} + C  \frac{1+\lambda \frac{g(\rho)}{f(\rho)}}{1 - \lambda \frac{g(\rho)}{f(\rho)}} \sup_{|t| \leq \rho} \frac{|\Phi(t)|}{t^2} .
$$
Fix $\varepsilon < \frac{\mu_0}{12}$. Now we choose $\rho > 0$ so small that
$$
C_\eps \rho^{p-2} +  C_\eps \rho^{q-2}   + C 2\kappa  \rho^{p-2} + 2C \sup_{|t| \leq \rho} \frac{|\Phi(t)|}{t^2} < \frac{2 \mu_0}{3}.
$$
Choosing $\lambda$ so small that
$$
C 2\kappa \lambda \frac{g(\rho)}{f(\rho)}  \rho^{p-2} < \frac{\mu_0}{6}, \quad 0 \leq \frac{1+\lambda \frac{g(\rho)}{f(\rho)}}{1 - \lambda \frac{g(\rho)}{f(\rho)}} \leq 2.
$$
we obtain $K < \mu_0$.
\end{proof}

\begin{Prop}\label{prop:bdd}
Let $\beta \in \R$. There is a constant $M_\beta > 0$ such that for every sequence $(u_n) \subset X$ satisfying
$$
\cJ(u_n) \leq \beta, \quad (1+\|u_n\|) \cJ'(u_n) \to 0
$$
there holds
$$
\limsup_{n\to\infty} \| u_n\| \leq M_\beta.
$$
\end{Prop}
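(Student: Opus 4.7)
The plan is essentially to revisit the proof of Lemma \ref{lem:cerami_bdd} and observe that it already establishes a quantitative bound. Indeed, no step in that proof used that $(u_n)$ is a \emph{specific} sequence; the constants $\eps$, $\rho$, $\lambda$ are fixed first (so that $K < \mu_0$), and only then is the sequence considered. The only place where $\beta$ enters the estimates is through the application of
\[
    \beta + o(1) \geq \cJ(u_n) - \tfrac12 \cJ'(u_n)(u_n) = \int_{\R^N} \Phi(u_n)\, dx,
\]
which is used to bound $\int_{|u_n|\geq \rho} |u_n|^p\, dx$ in \eqref{1}. Tracing this through the subsequent estimate of $I_2$, one sees that the constant $\widetilde{C} = \widetilde{C}(\lambda,\rho,\eps)$ produced there in fact has the affine form $\widetilde{C} = A(\lambda,\rho,\eps)\,\beta + B(\lambda,\rho,\eps)$, independent of the choice of the sequence.

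Concretely, I would fix $\eps$, $\rho$, $\lambda$ exactly as at the end of the proof of Lemma \ref{lem:cerami_bdd}, so that the quantity $K$ defined there satisfies $K < \mu_0$, and denote $\theta := 1 - K/\mu_0 \in (0,1)$. For any sequence $(u_n) \subset X$ satisfying the hypotheses of the proposition, the Cerami-type condition $(1+\|u_n\|)\cJ'(u_n) \to 0$ yields
\[
    \cJ'(u_n)(u_n^+ - u_n^-) = o(1), \qquad \cJ'(u_n)(u_n) = o(1),
\]
since $\|u_n^+ - u_n^-\| \leq \|u_n\| \leq 1+\|u_n\|$. Repeating verbatim the splitting $\|u_n\|^2 = I_1 + I_2 + o(1)$ and the estimates of $I_1$ and $I_2$ from the proof of Lemma \ref{lem:cerami_bdd}, one arrives at the inequality
\[
    \|u_n\|^2 \leq \tfrac{K}{\mu_0}\|u_n\|^2 + A(\lambda,\rho,\eps)\,\beta + B(\lambda,\rho,\eps) + o(1),
\]
that is,
\[
    \theta \|u_n\|^2 \leq A(\lambda,\rho,\eps)\,\beta + B(\lambda,\rho,\eps) + o(1).
\]
Passing to the limit superior and setting
\[
    M_\beta := \left( \frac{A(\lambda,\rho,\eps)\,\beta + B(\lambda,\rho,\eps)}{\theta} \right)^{1/2}
\]
yields the desired conclusion $\limsup_{n\to\infty} \|u_n\| \leq M_\beta$.

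The only non-routine point is to verify that the parameters $\eps, \rho, \lambda$ in the proof of Lemma \ref{lem:cerami_bdd} are chosen independently of the particular Cerami sequence and of $\beta$ — which is visibly the case, since their selection only ensures $K < \mu_0$ and relies on pointwise behavior of $f,g$ near the origin together with the structural estimate \eqref{eq:lambda_estimate}. The main obstacle, if any, is book-keeping: one must make sure that the single $o(1)$ appearing in the final inequality is harmless, which it is, because for each fixed sequence it is absorbed into a term that vanishes as $n \to \infty$, and the $\limsup$ is insensitive to such vanishing quantities. Therefore the proposition follows as a quantitative refinement of Lemma \ref{lem:cerami_bdd}, with no additional machinery required.
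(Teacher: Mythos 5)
Your argument is correct, but it takes a different route from the paper. The paper proves Proposition \ref{prop:bdd} by contradiction with a diagonal argument: assuming the conclusion fails, for each $k$ it picks a sequence $(u_n^k)$ with $\cJ(u_n^k)\leq\beta$, $(1+\|u_n^k\|)\cJ'(u_n^k)\to 0$ and $\limsup_n\|u_n^k\|\geq k$, then selects indices $n(k)$ with $\|u_{n(k)}^k\|\geq k-1$ (and, implicitly, with the Cerami-type remainder small), obtaining an unbounded sequence that satisfies the hypotheses of Lemma \ref{lem:cerami_bdd} — a contradiction, with Lemma \ref{lem:cerami_bdd} used as a black box. You instead reopen the proof of Lemma \ref{lem:cerami_bdd} and observe that its estimates are quantitative and uniform: the parameters $\eps,\rho,\lambda$ are fixed independently of the sequence, the only sequence-dependent ingredients are the $o(1)$ terms coming from $\cJ'(u_n)(u_n^{\pm})$ and $\cJ'(u_n)(u_n)$, and $\beta$ enters only affinely through \eqref{1}, so that $\|u_n\|^2\leq \tfrac{K}{\mu_0}\|u_n\|^2 + A\beta + B + o(1)$ with $K<\mu_0$, whence an explicit $M_\beta$. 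This is a legitimate and slightly stronger conclusion (an explicit, affine-in-$\beta$ bound under the square root), at the price of having to verify the uniformity of every constant; the paper's diagonal argument is shorter and avoids that bookkeeping, but gives no formula for $M_\beta$. Two cosmetic points: your derivation of the inequality does not need the contradiction hypothesis $\|u_n\|\to\infty$ present in the lemma's proof (and indeed you do not use it), and since $\beta$ may be negative you should define $M_\beta$ as, say, $\bigl(\max\{A\beta+B,0\}/\theta\bigr)^{1/2}+1$ so that the square root is well defined; neither affects the validity of the argument.
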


\begin{proof}
Suppose by contradiction that there is $\beta \in \R$ and for any $k \geq 1$ we find a sequence $(u_n^k) \subset X$ such that
$$
\cJ(u_n^k) \leq \beta, \quad (1+\|u_n^k\|) \cJ'(u_n^k) \to 0
$$
and 
$$
\limsup_{n\to\infty} \| u_n^k\| \geq k.
$$
Let $n(k)$ be a number such that $\|u_{n(k)}^k\| \geq k-1$. Without loss of generality we may assume that $n(k)$ is increasing with respect to $k$. Then the sequence $\left( u_{n(k)}^k \right)_k$ satisfies assumptions of Lemma \ref{lem:cerami_bdd}, but is unbounded - a contradiction.
\end{proof}

\section{Existence of a nontrivial solution}

In order to prove Theorem \ref{th:main1}, we need the following concentration-compactness principle in the spirit of Lions, see \cite[Corollary 3.2, Remark 3.3]{M3}.

\begin{Lem}\label{lem:lions}
Suppose that $(u_n) \subset X$ is bounded and for all $R > 0$ the following vanishing condition 
\begin{equation}\label{vanishing}
\lim_{n\to+\infty} \sup_{z \in \R^{N-K}} \int_{B( (0,z), R)} |u_n|^2 \, dx =0
\end{equation}
holds. Then,
$$
\int_{\R^N} |\Psi(u_n)| \, dx \to 0 \ as \ n \to +\infty
$$
for any continuous function $\Psi : \R \rightarrow \R$ satisfying
$$
\lim_{s \to 0} \frac{\Psi(s)}{s^2} = \lim_{|s|\to+\infty} \frac{\Psi(s)}{s^{2^*}} = 0.
$$
\end{Lem}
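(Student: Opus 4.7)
The plan is to first upgrade the vanishing hypothesis, which is stated only for balls centered on the symmetry axis $\{0\}\times\R^{N-K}$, to a genuine vanishing hypothesis uniformly over all centers in $\R^N$, and then apply the standard Lions concentration-compactness lemma on $\R^N$. Once we have $L^q$-vanishing for some $q\in(2,2^*)$, a routine three-region split of $\Psi$ finishes the argument.

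For the upgrade, fix $R>0$ and $x_0=(y_0,z_0)\in\R^N$. I would consider two regimes for $|y_0|$. If $|y_0|\leq R_0$ for some threshold $R_0$ to be chosen, then $B(x_0,R)\subset B((0,z_0),R+R_0)$, so
$$
\sup_{|y_0|\leq R_0,\,z_0\in\R^{N-K}} \int_{B(x_0,R)} u_n^2\,dx \;\leq\; \sup_{z_0\in\R^{N-K}} \int_{B((0,z_0),R+R_0)} u_n^2\,dx,
$$
which tends to $0$ by hypothesis. If instead $|y_0|>R_0$, I would exploit the $O(K)$-invariance: for fixed $z$, the function $y\mapsto u_n(y,z)$ is radial on $\R^K$, so the integral over $B(x_0,R)$ equals a surface-area factor times a one-dimensional integral over the annulus $\{|y_0|-R<|y|<|y_0|+R\}$. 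A direct comparison with $\int_{\R^K} u_n(y,z)^2\,dy$ (written in polar coordinates) yields
$$
\int_{B(x_0,R)} u_n^2\,dx \;\lesssim\; \bigl(R/|y_0|\bigr)^{K-1} \|u_n\|_{L^2(\R^N)}^2,
$$
which can be made uniformly small by choosing $R_0$ large, using the boundedness of $(u_n)$ in $X\hookrightarrow L^2(\R^N)$. Combining the two regimes gives the full vanishing condition $\lim_n \sup_{x\in\R^N}\int_{B(x,R)} u_n^2\,dx=0$, and the classical Lions lemma then yields $u_n\to 0$ in $L^q(\R^N)$ for every $q\in(2,2^*)$.

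For the conclusion, fix $\eps>0$ and use the two limits in the assumption on $\Psi$ to pick $0<\delta<M$ so that $|\Psi(s)|\leq \eps s^2$ on $|s|\leq\delta$ and $|\Psi(s)|\leq \eps|s|^{2^*}$ on $|s|\geq M$. Fix any $q\in(2,2^*)$; then the continuous function $\Psi$ is bounded on $\{\delta\leq|s|\leq M\}$, so $|\Psi(s)|\leq C_\eps|s|^q$ there. Hence for every $s\in\R$,
$$
|\Psi(s)| \;\leq\; \eps s^2 + \eps|s|^{2^*} + C_\eps|s|^q.
$$
Integrating and using the uniform bounds of $(u_n)$ in $L^2(\R^N)$ and $L^{2^*}(\R^N)$ (from the Sobolev embedding applied to the bounded sequence in $X$) together with $u_n\to 0$ in $L^q(\R^N)$, I obtain $\limsup_n \int_{\R^N}|\Psi(u_n)|\,dx\lesssim \eps$, and letting $\eps\to 0$ concludes.

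The main obstacle is the upgrade of the vanishing condition from the symmetry axis to all of $\R^N$; once that is in place the rest is standard. The argument relies crucially on the $O(K)$-invariance built into the space $X$ (so the integrals on balls with $|y_0|$ large decay automatically); the Hardy-type integrability $\int u^2/r^2\,dx<\infty$ in the definition of $X$ is not strictly needed for this lemma, only the $O(K)$-symmetry and boundedness in $L^2$.
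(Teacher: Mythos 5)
Your proof is correct, but it is worth noting that the paper does not prove this lemma at all: it is quoted directly from Mederski's concentration--compactness result \cite[Corollary 3.2, Remark 3.3]{M3}, which is formulated precisely for sequences with partial (here $O(K)$) symmetry and vanishing measured only along translations in the $z$-variables. What you have done is supply a self-contained proof of that cited result in the present setting, and your two ingredients are sound: (i) the upgrade of the axial vanishing condition to the full Lions condition $\sup_{x\in\R^N}\int_{B(x,R)}u_n^2\,dx\to 0$, where for $|y_0|>R_0$ the $O(K)$-invariance confines $B_{\R^K}(y_0,R)$ to an angular sector of the annulus $\{|y_0|-R<|y|<|y_0|+R\}$ of spherical-cap measure $\lesssim (R/|y_0|)^{K-1}$, giving $\int_{B(x_0,R)}u_n^2\,dx\lesssim (R/|y_0|)^{K-1}|u_n|_2^2$ (this indeed uses only $K\geq 2$ and boundedness in $L^2$, which follows from \eqref{mu0}, and the constant is uniform once, say, $R_0\geq 2R$); and (ii) the classical Lions lemma in $H^1(\R^N)$ followed by the three-region bound $|\Psi(s)|\leq \eps s^2+C_\eps|s|^q+\eps|s|^{2^*}$, which only needs boundedness of $(u_n)$ in $L^2\cap L^{2^*}$ (via $X\hookrightarrow H^1(\R^N)$ and Sobolev) and $|u_n|_q\to 0$. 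Compared with the paper's bare citation, your route is more elementary and transparent for this specific cylindrical situation; the citation to \cite{M3} buys generality (arbitrary symmetric settings and a family of nonlinear terms treated at once) at the cost of opacity. The only points you should state explicitly rather than leave implicit are that boundedness in $X$ implies boundedness in $H^1(\R^N)$ (equivalence of the spectral norm with the form norm, plus Hardy's inequality if $a<0$), and that the radiality of $u_n(\cdot,z)$ holds for a.e.\ $z$ by the $O(K)\times\{I_{N-K}\}$-invariance, so the slicewise sector estimate is justified by Fubini.
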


As an easy consequence of Lemma \ref{lem:lions} we obtain the following.

\begin{Prop}\label{prop:lions}
Suppose that a bounded sequence $(u_n) \subset X$ satisfies \eqref{vanishing} for every $R > 0$. Then
$$
\int_{\R^N} \tilde{f}(u_n)u_n^{\pm} \, dx \to 0.
$$
\end{Prop}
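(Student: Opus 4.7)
My plan is to bound $\left|\int_{\R^N} \tilde f(u_n) u_n^\pm \, dx\right|$ pointwise using the growth conditions on $f$ and $g$, then estimate each resulting integral via Hölder's inequality, converting norms of $u_n^\pm$ into norms of $u_n$ via the continuity of the projections (i.e.\ using $\kappa$ from \eqref{eq:kappa}), and finally invoking Lemma \ref{lem:lions} to make all super-quadratic terms vanish while absorbing the quadratic term into an arbitrary small parameter.

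Concretely, combining \eqref{growth_f} and \eqref{growth_g} with $\tilde f=f-\lambda g$ yields, for any $\eps>0$,
\[
|\tilde f(u)| \leq \eps(1+\lambda)|u| + C_\eps |u|^{p-1} + \lambda C_{G,\eps}|u|^{q-1}, \qquad u\in\R.
\]
Multiplying by $|u_n^\pm|$, integrating, and applying Hölder's inequality on each term gives
\[
\int_{\R^N}|\tilde f(u_n)||u_n^\pm|\,dx \leq \eps(1+\lambda)|u_n|_2|u_n^\pm|_2 + C_\eps |u_n|_p^{p-1}|u_n^\pm|_p + \lambda C_{G,\eps}|u_n|_q^{q-1}|u_n^\pm|_q.
\]
Using \eqref{eq:kappa} together with \eqref{mu0}, this is bounded by
\[
\tfrac{\kappa}{\mu_0^2}\eps(1+\lambda)\|u_n\|^2 + \kappa C_\eps|u_n|_p^{p} + \kappa\lambda C_{G,\eps}|u_n|_q^{q}.
\]

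Now I would apply Lemma \ref{lem:lions} to the functions $\Psi(s)=|s|^p$ and $\Psi(s)=|s|^q$, both of which satisfy the required limits since $2<q<p<2^*$. The vanishing hypothesis \eqref{vanishing} together with boundedness of $(u_n)$ then gives $|u_n|_p\to 0$ and $|u_n|_q\to 0$, so the last two terms tend to zero. For the first term, the boundedness of $(u_n)$ in $X$ yields
\[
\limsup_{n\to\infty}\left|\int_{\R^N}\tilde f(u_n)u_n^\pm\,dx\right| \leq \tfrac{\kappa}{\mu_0^2}\eps(1+\lambda)\sup_n\|u_n\|^2,
\]
and since $\eps>0$ is arbitrary the left-hand side is zero, proving the claim.

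There is no real obstacle here: the argument is a direct application of Lemma \ref{lem:lions} combined with the continuity of the projections onto $X^\pm$ in $L^s$-spaces for $s\in[2,2^*)$. The only mildly delicate point is that $u_n^\pm$ is not a pointwise function of $u_n$, so Lemma \ref{lem:lions} cannot be applied directly to $\Psi(s)=\tilde f(s)s$; this is circumvented by the Hölder step, which separates $\tilde f(u_n)$ from $u_n^\pm$ before the $L^s$-norms of the projections are absorbed into those of $u_n$ via the factor $\kappa$.
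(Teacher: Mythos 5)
Your proof is correct and follows essentially the same route as the paper: split $\tilde f$ via the growth estimates \eqref{growth_f}--\eqref{growth_g}, apply H\"older, control the projection norms (the paper absorbs them via boundedness and $\lesssim$, you via $\kappa$ and $\mu_0$, which is equivalent), and invoke Lemma \ref{lem:lions} with $\Psi(s)=|s|^p$ and $\Psi(s)=|s|^q$ before letting $\eps\to 0$. No issues to report.
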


\begin{proof}
Fix $\eps > 0$. Then, for some $C_\eps > 0$
\begin{align*}
\left| \int_{\R^N} \tilde{f}(u_n)u_n^{\pm} \, dx \right| &\leq \eps \int_{\R^N} |u_n u_n^\pm| \, dx + C_\eps \left( \int_{\R^N} |u_n|^{p-1} |u_n^\pm| \, dx + \int_{\R^N} |u_n|^{q-1} |u_n^\pm| \, dx \right) \\
&\leq \eps |u_n|_2 |u_n^\pm |_2 + C_\eps \left( |u_n^\pm|_p \left( \int_{\R^N} |u_n|^p \,dx \right)^{\frac{p-1}{p}} + |u_n^\pm|_q \left( \int_{\R^N} |u_n|^q \,dx \right)^{\frac{q-1}{q}} \right) \\
&\lesssim \eps + C_\eps \left( \left( \int_{\R^N} |u_n|^p \,dx \right)^{\frac{p-1}{p}} + \left( \int_{\R^N} |u_n|^q \,dx \right)^{\frac{q-1}{q}} \right).
\end{align*}
From Lemma \ref{lem:lions} we get $\int_{\R^N} |u_n|^p \,dx \to 0$ and $\int_{\R^N} |u_n|^q \,dx \to 0$, and therefore
$$
\int_{\R^N} \tilde{f}(u_n)u_n^{\pm} \, dx \to 0.
$$
\end{proof}

\begin{proof}[Proof of Theorem \ref{th:main1}]
By Section \ref{sect:3}, the functional $\cJ$ satifies assumtpions (A1)-(A3), hence we find a sequence $(u_n)$ satisfying \eqref{Cer}. Moreover, by Proposition \ref{prop:bdd} the sequence $(u_n)$ is bounded. Hence, up to a subsequence, there exists $u_0 \in X$ such that $u_n \weakto u_0$. Suppose that \eqref{vanishing} holds for every radius $R > 0$. Then, by Lemma \ref{lem:lions} we have that $u_n \to 0$ in $L^t(\R^N)$ for any $2 < t < 2^*$. From \eqref{Cer} and Proposition \ref{prop:lions} we have that
\begin{align*}
	o(1)&=\cJ'(u_n)u_n^+ = \|u_n^+\|^2 - \int_{\R^N}\tilde{f}(u_n)u_n^+ \, dx = \|u_n^+\|^2 + o(1)
\end{align*}
and therefore $\|u_n^+\| \to 0$. Similarly we obtain that $\|u_n^-\| \to 0$. Thus $\triple{u_n} \leq \|u_n\| \to 0$ and we reach a contradiction, since $\triple{u_n} \geq \frac{\delta}{2}$. Hence, there is $R > 0$ and a sequence $(z_n)_n \subset \Z^{N-K}$ such that
$$
\liminf_{n \to +\infty}\int_{B(0, R)} |v_n|^2 \, dx > 0,
$$
where $v_n:=u_n(\cdot,\cdot-z_n)$. Moreover $\|v_n\| = \|u_n\|$ so that $(v_n)$ is bounded and $v_n \weakto v_0 \neq 0$. $\{ I_K \} \times \Z^{N-K}$-invariance of $\cJ$ implies that $(v_n)$ also satisfies $(1+\|v_n\|)\cJ'(v_n) \to 0$ and by the weak-to-weak* continuity of $\cJ'$ we obtain that $\cJ'(v_0) = 0$, and the proof is completed.
\end{proof}

\begin{proof}[Proof of Theorem \ref{th:main2}]
The statement follows directly from Theorem \ref{th:main1} and Theorem \ref{th:equiv}.
\end{proof}

\begin{Prop}\label{prop:energy}
The total electromagnetic energy $\cL(t)$ given by \eqref{eq:electroen} of the solution $\mathbf{E}$ found in Theorem \ref{th:main2}, is finite and does not depend on $t$.
\end{Prop}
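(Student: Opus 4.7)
The plan is to compute $\cL(t)$ explicitly by exploiting both the time-harmonic ansatz and the Maxwell equation satisfied by $\mathbf{E}$, and then to observe that the time-dependent trigonometric factors combine to a constant. The two assertions (finiteness and time-independence) are then immediate from the resulting formula.

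First, I would recall the standing simplifying hypotheses from the introduction: $\mu\equiv 1$ and the absence of magnetization, so the constitutive relations reduce to $\cD=\eps\cE+\cP$ and $\cH=\cB$. With $\cE=\mathbf{E}(x)\cos(\omega t)$ and $\cP=\mathbf{P}(\mathbf{E})\cos(\omega t)$, Faraday's law $\partial_t\cB=-\curl\cE$ integrated in time (with no static background field, consistent with the purely time-harmonic regime) yields
$$
\cB(x,t)=-\frac{\sin(\omega t)}{\omega}\,\curl \mathbf{E}(x), \qquad \cH(x,t)=\cB(x,t).
$$

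Second, I would use the equation itself. Since $\mathbf{E}$ solves \eqref{eq:maxwell} with $V=-\omega^2\eps$ and $h(\mathbf{E})=\omega^2\mathbf{P}(\mathbf{E})$, one has the pointwise identity
$$
\eps\,\mathbf{E}+\mathbf{P}(\mathbf{E})=\frac{1}{\omega^2}\curl(\curl \mathbf{E}).
$$
Substituting this, together with the formula for $\cB$, into the definition \eqref{eq:electroen} gives
$$
\cL(t)=\frac{\cos^2(\omega t)}{2\omega^2}\int_{\R^3}\mathbf{E}\cdot\curl(\curl \mathbf{E})\,dx+\frac{\sin^2(\omega t)}{2\omega^2}\int_{\R^3}|\curl \mathbf{E}|^2\,dx.
$$

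Third, I would justify the integration by parts
$$
\int_{\R^3}\mathbf{E}\cdot\curl(\curl \mathbf{E})\,dx=\int_{\R^3}|\curl \mathbf{E}|^2\,dx,
$$
which holds by density since $\mathbf{E}\in H^1(\R^3;\R^3)$ by Theorem \ref{th:equiv}. Combined with $\cos^2(\omega t)+\sin^2(\omega t)=1$, this produces
$$
\cL(t)=\frac{1}{2\omega^2}\int_{\R^3}|\curl \mathbf{E}|^2\,dx,
$$
which is manifestly independent of $t$; finiteness follows from $\curl\mathbf{E}\in L^2(\R^3;\R^3)$, again a consequence of $\mathbf{E}\in H^1(\R^3;\R^3)$.

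The main delicate point is the rigorous justification of the integration by parts in an $H^1$-setting on the whole space, which I would handle by approximating $\mathbf{E}$ with smooth compactly supported vector fields and passing to the limit in the bilinear form $(\mathbf{u},\mathbf{v})\mapsto\int_{\R^3}\curl\mathbf{u}\cdot\curl\mathbf{v}\,dx$; this is standard but should be spelled out, together with the tacit normalization $\cB(\cdot,0)\equiv 0$ selecting the purely oscillating magnetic field consistent with the time-harmonic ansatz.
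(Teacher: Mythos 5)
Your argument is correct and reaches the same conclusion, but it is organized differently from the paper's proof, and one step should be phrased more carefully. The paper first rewrites $\cL(t)$, via the constitutive relations and $\cB=-\frac{\sin(\omega t)}{\omega}\curl\mathbf{E}$, as
$$
\cL(t)=\frac{1}{2\omega^2}\int_{\R^3}\bigl(-V(x)|u|^2+\tilde f(u)u\bigr)\cos^2(\omega t)+\Bigl(|\nabla u|^2+\frac{u^2}{r^2}\Bigr)\sin^2(\omega t)\,dx,
$$
reads off finiteness from $u\in X$ together with the growth assumptions, and then differentiates in $t$: the derivative is a multiple of $\int_{\R^3}|\nabla u|^2+\frac{u^2}{r^2}+V(x)u^2-\tilde f(u)u\,dx$, which vanishes because $u$ is a critical point of $\cJ$, i.e. $\cJ'(u)(u)=0$. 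You instead substitute the equation into the $\cE\cdot\cD$ term \emph{before} differentiating and use $\cos^2+\sin^2=1$, which yields the cleaner closed form $\cL\equiv\frac{1}{2\omega^2}\int_{\R^3}|\curl\mathbf{E}|^2\,dx$. The caveat is that $\mathbf{E}$ is only an $H^1$ weak solution, so the identity $\eps\mathbf{E}+\mathbf{P}(\mathbf{E})=\frac{1}{\omega^2}\curl(\curl\mathbf{E})$ holds only distributionally, and ``substituting it pointwise into the integral and then integrating by parts'' is a formal shortcut: the rigorous content of that step is exactly that $\mathbf{E}$ is an admissible test function in its own weak formulation, i.e. $\int_{\R^3}|\curl\mathbf{E}|^2+V(x)|\mathbf{E}|^2\,dx=\int_{\R^3}h(\mathbf{E})\cdot\mathbf{E}\,dx$ (equivalently $\cJ'(u)(u)=0$ after Theorem \ref{th:equiv}), which is precisely the identity the paper invokes after taking $\frac{d}{dt}$. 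Once stated this way no separate density argument is needed, and the integrability of $-V(x)|\mathbf{E}|^2+h(\mathbf{E})\cdot\mathbf{E}$ (from $u\in X$, $V\in L^\infty$, (F1) and (G1)) is what underwrites finiteness; your normalization $\cB(\cdot,0)=0$ in integrating Faraday's law agrees with the paper's implicit choice. So the two proofs rest on the same ingredient, applied at different stages: yours buys an explicit constant value of $\cL$, the paper's avoids any distributional manipulation by staying with the $u$-variables throughout.
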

\begin{proof}
Using the equivalence (Theorem \ref{th:equiv}) and constitutive relations, we get 
\begin{align*}
\cL(t)&=\frac12 \int_{\R^3} \cE\cD + \cB\cH \, dx = \frac12 \int_{\R^3} \cE\cD + \cB \cB \, dx \\
&= \dfrac{1}{2\omega^2} \int_{\R^3} \left(-V(x)|\mathbf{E}|^2+h(\mathbf{E})\mathbf{E}\right)\cos^2(\omega t) + |\nabla \times \mathbf{E}|^2 \sin^2(\omega t)  \, dx \\
&= \dfrac{1}{2\omega^2} \int_{\R^3} \left(-V(x)|u|^2+\tilde{f}(u)u\right)\cos^2(\omega t) + \left(|\nabla u|^2 + \frac{u^2}{r^2} \right) \sin^2(\omega t)  \, dx.
\end{align*}
Since $u \in X$, $|\cL(t)| < +\infty$. To show that $\cL(t)$ does not depend on $t$ we compute that
\begin{align*}
\frac{d}{dt} \cL(t) &= \frac{d}{dt} \dfrac{1}{2\omega^2} \int_{\R^3} \left(-V(x)|u|^2+\tilde{f}(u)u\right)\cos^2(\omega t) + \left(|\nabla u|^2 + \frac{u^2}{r^2} \right) \sin^2(\omega t)  \, dx \\
&=\dfrac{\sin(\omega t) \cos(\omega t)}{\omega} \int_{\R^3} |\nabla u|^2 + \frac{u^2}{r^2}  + V(x)|u|^2-\tilde{f}(u)u   \, dx = 0.
\end{align*}
\end{proof}

\section*{Acknowledgements}

The authors would like to thank anonymous referees for valuable comments and especially for indicating a gap in the assumption (A3) and providing simplifications of the proof of Theorem \ref{th:abstrExistence} in the original version of the manuscript.

Bartosz Bieganowski was partially supported by the National Science Centre, Poland (Grant No. 2017/25/N/ST1/00531). Federico Bernini started working on this project during his scientific internship in Nicolaus Copernicus University in Toru\'{n}, supported by the Project PROM. "Project PROM - International scolarship exchange of PhD candidates and academic staff" is co-funded from the European Social Fund as part of the Program Knowledge Education Development, a non-contest project entitled "International scolarship exchange of PhD candidates and academic staff", agreement number POWR.03.03.00-00-PN/13/18.

\end{document}